\newcommand{\LF}{\text{LF}}
\DeclareMathOperator{\dist}{dist}
\newcommand{\fc}{\mathfrak c}
\newcommand{\calR}{\mathcal R}
\newcommand{\bbN}{{\mathbb N}}
\newcommand{\bbR}{\mathbb R}
\newcommand{\bbT}{\mathbb T}
\newcommand{\cZ}{{\mathcal Z}}
\newcommand{\e}{\varepsilon}
\newtheorem{thm}{Theorem}
\newtheorem{theorem}{Theorem}[section]
\newtheorem{remark}[theorem]{Remark}
\newtheorem{question}[theorem]{Question}
\newtheorem{lemma}[theorem]{Lemma}
\newtheorem{prop}[theorem]{Proposition}
\theoremstyle{definition}
\newcounter{my_enumerate_counter}
\newcommand{\pushcounter}{\setcounter{my_enumerate_counter}{\value{enumi}}}
\newcommand{\popcounter}{\setcounter{enumi}{\value{my_enumerate_counter}}}
\newcommand{\bbZ}{\mathbb Z}
\newcommand{\fb}{\mathfrak b}
\DeclareMathOperator{\Ad}{Ad}
\newcommand{\F}{\mathcal F}
\DeclareMathOperator{\Aut}{Aut}
\title[A simple $\mathrm{C}^*$-algebra  with finite nuclear dimension\dots]
{A simple $\mathrm{C}^*$-algebra  with finite nuclear dimension which is not $\cZ$-stable}
\author{Ilijas Farah}
\address{Department of Mathematics and Statistics\\
York University\\
4700 Keele Street\\
North York, Ontario\\ Canada, M3J 1P3\\
and Matematicki Institut, Kneza Mihaila 35, Belgrade, Serbia}
\urladdr{http://www.math.yorku.ca/$\sim$ifarah}
\email{ifarah@mathstat.yorku.ca}
\thanks{Dedicated to Stuart White on the occasion of his 33rd birthday}
\author{Dan Hathaway}
\address{Mathematics Department\\
University of Michigan\\
Ann Arbor, MI 48109--1043, U.S.A.}
\email{danhath@umich.edu}
\author{Takeshi Katsura}
\address{Department of Mathematics\\
Faculty of Science and Technology\\
Keio University\\
3-14-1 Hiyoshi, Kouhoku-ku, Yokohama\\
JAPAN, 223-8522}
\email{katsura@math.keio.ac.jp}
\author{Aaron Tikuisis}
\address{Mathematisches Institut\\
Westf\"alische Wilhelms-Universit\"at M\"unster\\
48149 M\"unster, Germany}
\urladdr{http://www.math.uni-muenster.de/u/aaron.tikuisis}
\email{a.tikuisis@uni-muenster.de}
\begin{document}
\begin{abstract} 
We construct a simple 
$\mathrm{C}^*$-algebra with nuclear dimension zero that is not isomorphic to 
its tensor product with the Jiang--Su algebra $\cZ$, and  a hyperfinite II$_1$ factor 
not isomorphic to its  tensor product with the separable hyperfinite  II$_1$ factor $\calR$. 
The proofs use a weakening of the Continuum Hypothesis. 
\end{abstract} 

\maketitle

Elliott's program of classification of nuclear (a.k.a.\ amenable) $\mathrm{C}^*$-algebras recently 
underwent  a transformative phase (see e.g., \cite{EllTo:Regularity}). 
Following the counterexamples of R\o rdam and Toms to the original program,
it was realized that a regularity assumption stronger than the nuclearity is necessary for 
$\mathrm{C}^*$-algebras to be classifiable by $K$-theoretic invariants.
Conjecturally, regularity properties of three different flavours are all equivalent and are, modulo the UCT, sufficient for classification (restricting, say, to simple, separable, nuclear $\mathrm{C}^*$-algebras).
We shall consider two of these regularity assumptions on a $\mathrm{C}^*$-algebra $A$. 
One of them 
asserts that $A$ is \emph{$\cZ$-stable}, meaning that it is isomorphic to its tensor product with the Jiang-Su algebra $\cZ$.
An another postulates  that $A$ has finite \emph{nuclear dimension}, 
this being a strengthening of the Completely Positive Approximation Property (CPAP) 
introduced by Winter and Zacharias  in~\cite{WinterZacharias:NucDim}
(the CPAP is an equivalent formulation of amenability for $\mathrm{C}^*$-algebras, see \cite[Chapter 2]{BrOz:C*}). 
The Toms--Winter conjecture states  
(among other things) that 
for separable, nuclear, simple, non-type I $\mathrm{C}^*$-algebras, having finite nuclear dimension is equivalent 
to being $\cZ$-stable (see e.g., \cite{Win:Ten,ToWin:Elliott}). 
The direct implication is a theorem of Winter~(\cite{Win:Nuclear}).  
We show that it badly fails if the separability assumption is dropped. 

\begin{thm}\label{T0}
The Continuum Hypothesis implies that there exists   
a simple nuclear $\mathrm{C}^*$-algebra with nuclear dimension zero which is not $\cZ$-stable. 
\end{thm}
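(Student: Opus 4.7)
The plan is to build $A$ as a continuous increasing union $A=\bigcup_{\alpha<\omega_1}A_\alpha$ of unital, separable, simple AF subalgebras with unital connecting inclusions. Since every finite subset of $A$ lies in some $A_\alpha$ and each $A_\alpha$ is AF, $A$ is locally AF and therefore has nuclear dimension zero; simplicity passes to the union since the connecting maps are unital injections. The remaining task is to prevent $\cZ$-stability, for which I would arrange that the central sequence algebra
\[ F(A) \;=\; (A_\omega \cap A')/\mathrm{Ann}(A,A_\omega) \]
equals $\bbC\cdot 1$. By Kirchberg's characterisation, $\cZ$-stability of $A$ forces a unital embedding $\cZ\hookrightarrow F(A)$, so a trivial $F(A)$ rules it out.

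Under CH, $|A|=\aleph_1$, and the collection of candidate pairs $((a_n)_n,b)$, where $(a_n)_n$ is a bounded sequence in $A$ representing a potentially non-scalar central class and $b\in A$ is a candidate witness of non-scalarity, has cardinality $\aleph_1$. A standard $\omega_1$-bookkeeping assigns such pairs to successor stages so that every pair eventually appearing in $A$ is treated at some stage. At the successor stage $\alpha+1$, given $A_\alpha$ and a pair $((a_n)_n,b)$ with $(a_n)\subset A_\alpha$ asymptotically commuting with $A_\alpha$ and $b\in A_\alpha$ preventing $(a_n)$ from being scalar modulo the annihilator, I would construct a separable simple unital AF extension $A_{\alpha+1}\supset A_\alpha$ containing an element $c$ with $\limsup_n\|[a_n,c]\|>0$, thereby killing that candidate. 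Limit ordinals are handled by unions, which preserve separability, AF-ness and simplicity of a continuous chain of unital injections.

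The main obstacle I expect is the successor-stage surgery: given a separable simple unital AF algebra $A_\alpha$ and a bounded sequence in $A_\alpha$ that centralises it asymptotically without being a scalar modulo annihilator, produce a separable simple unital AF extension together with a new element whose commutator with $a_n$ stays bounded away from zero. The flexibility of Bratteli diagrams should make this feasible: one refines a finite-dimensional approximant for $(a_n)$ so that its new matrix units cross the near-spectral decomposition of $(a_n)$ nontrivially, and realises the enlargement as one level of a Bratteli diagram extending $A_\alpha$ to a simple unital AF $A_{\alpha+1}$. After $\omega_1$ steps every candidate non-scalar central sequence that appears in $A$ has been killed at some earlier stage, so $F(A)=\bbC$ and $A$ cannot be isomorphic to $A\otimes\cZ$.
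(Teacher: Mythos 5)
Your high-level architecture is the same as the paper's: a length-$\omega_1$ transfinite recursion under CH through separable locally finite-dimensional algebras, with bookkeeping that assigns every candidate central sequence to a successor stage where it is ``decentralized,'' plus the observations that local finite-dimensionality gives nuclear dimension zero and that a trivial (or merely abelian) central sequence algebra rules out $\cZ$-stability (the paper's Lemma~\ref{L0}). The gap is exactly where you flag ``the main obstacle'': the successor-stage surgery is not a routine Bratteli-diagram refinement, and the mechanism you sketch does not work. If $A_{\alpha+1}$ is built level-by-level over $A_\alpha$ (e.g.\ $A_{\alpha+1}=M_k\otimes A_\alpha$ with the untwisted inclusion $b\mapsto 1\otimes b$, or any extension in which the inclusion respects the finite-dimensional filtration), then a fixed element $c$, being within $\e$ of some finite level, will still asymptotically commute with a central sequence $(a_n)$ that escapes into the tail of $A_\alpha$'s filtration --- so ``matrix units crossing the near-spectral decomposition'' at one new level cannot produce $\limsup_n\|[a_n,c]\|>0$. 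What is actually needed is an embedding of $A_\alpha$ into $A_{\alpha+1}$ that is \emph{globally twisted} by an endomorphism $\alpha$ of $A_\alpha$ satisfying $\liminf_n\|a_n-\alpha(a_n)\|>0$; the paper realizes this as $a\mapsto\mathrm{diag}(a,\alpha(a))\in M_2(A_\alpha)$, whereupon the flip unitary $v$ gives $\|[\mathrm{diag}(a_n,\alpha(a_n)),v]\|=\|a_n-\alpha(a_n)\|$ (Lemma~\ref{L.endo}).

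Producing such an $\alpha$ is the real content and is entirely missing from your proposal. The paper first shows (Proposition~\ref{P.separable}, using the tensor-product structure of a separable LM algebra) that a non-trivial central sequence must fail to asymptotically commute with some \emph{other central sequence} of unitaries $(u_n)$ --- note that a single witness element $b\in A_\alpha$, as in your bookkeeping pairs, can witness nothing, since $(a_n)$ central in $A_\alpha$ forces $\|[a_n,b]\|\to 0$. It then passes to a subsequence along which the infinite product $\lim_k\Ad(v_{n(1)}\cdots v_{n(k)})$ converges pointwise to an endomorphism moving $(a_n)$ by a definite amount (Lemmas~\ref{L.ZZ} and~\ref{L.central}; convergence is where separability, packaged as density character $<\fb$, enters). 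Without this chain --- non-trivial $\Rightarrow$ non-hypercentral $\Rightarrow$ convergent product of inner automorphisms $\Rightarrow$ twisted $M_2$-embedding --- the successor step, and hence the whole induction, does not close. A secondary, fixable issue: you work with general simple AF algebras, whereas the witness-producing argument uses the locally matricial (UHF-like) structure, so the class should be restricted to LM algebras as in the paper.
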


The paradigm of regularity properties for $\mathrm{C}^*$-algebras parallels certain older ideas in the study of von Neumann algebras.
It has long been known that amenability for von Neumann algebras is equivalent to hyperfiniteness, and in the separable, non-type I case, it implies $\calR$-stability (the property of being isomorphic to one's tensor product with the unique separable hyperfinite II$_1$ factor $\calR$; von Neumann algebras with this property are commonly called McDuff) (see \cite[Chapters XIV and XVI]{Tak:TheoryIII}).
It is also not a stretch to find that amenability is equivalent to a von Neumann-theoretic analogue of nuclear dimension zero \cite[Lemma 1.2]{HirKirWhi}.
The construction used to prove Theorem \ref{T0} adapts easily to the von Neumann case, allowing us to prove the following.

\begin{thm}\label{T0-vN}
The Continuum Hypothesis implies that there exists   
a hyperfinite II$_1$ factor which is not $\calR$-stable.
\end{thm}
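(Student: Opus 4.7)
The plan is to mirror, in the tracial von Neumann setting, the transfinite construction underlying Theorem~\ref{T0}. Under CH, I would build $M$ as the weak-operator closure of a continuous increasing chain $(M_\alpha)_{\alpha<\omega_1}$ of separable hyperfinite II$_1$ subfactors, each (being separable and hyperfinite) isomorphic to $\calR$. Hyperfiniteness of $M$ is automatic: the weak closure of a directed union of hyperfinite subfactors is hyperfinite. Under CH, both $M$ and $M\,\bar\otimes\,\calR$ have density character $\aleph_1$, so every candidate $*$-isomorphism $\ph\colon M\to M\,\bar\otimes\,\calR$ is determined by countably many pieces of data (its action on a WOT-dense countable subset), and the collection of all such countable partial data can be enumerated in order type $\omega_1$ as $(\ph_\alpha)_{\alpha<\omega_1}$, with standard bookkeeping that arranges the domain of $\ph_\alpha$ to lie in $M_\alpha$.

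With this enumeration fixed, the construction proceeds by transfinite recursion. At each successor stage $\alpha+1$ one produces $M_{\alpha+1}\supset M_\alpha$, separable and hyperfinite, so that $\ph_\alpha$ cannot extend to an isomorphism $M\to M\,\bar\otimes\,\calR$. The criterion to exploit is the McDuff-type characterization of $\calR$-stability via central sequences: if $M\cong M\,\bar\otimes\,\calR$, then $M'\cap M^{\cU}$ (for a free ultrafilter $\cU$ on $\bbN$) contains a unital copy of $\calR$, and in particular supports noncommuting asymptotically central sequences. To obstruct $\ph_\alpha$, one selects an asymptotically central sequence $(x_n)\subset M_\alpha$ and a corresponding candidate image sequence $(\ph_\alpha(x_n))\subset M\,\bar\otimes\,\calR$, then adjoins to $M_\alpha$ a new unitary $u\in M_{\alpha+1}$ whose commutation behavior with $(x_n)$ is incompatible with the commutation behavior forced on the image side inside $M\,\bar\otimes\,\calR$. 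Taking weak closures at limit stages and setting $M=\overline{\bigcup_{\alpha<\omega_1}M_\alpha}^{\mathrm{WOT}}$, a closing-off argument shows that any hypothetical isomorphism $\Phi\colon M\to M\,\bar\otimes\,\calR$ would, on some $M_\alpha$, coincide with one of the enumerated $\ph_\alpha$, and hence would have been ruled out at stage $\alpha+1$.

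The main obstacle is the kill step: producing $M_{\alpha+1}$ that remains a hyperfinite separable subfactor while robustly obstructing every potential completion of $\ph_\alpha$ to a global isomorphism. The abundant approximate inner flexibility of $\calR$ means that a naive local obstruction can be absorbed by a later enlargement of $M_\alpha$; the adjunction must therefore be performed with enough foresight (coordinated with the central sequence chosen) that the incompatibility survives passage to the limit. This is the exact analogue of the $\mathrm{C}^*$-algebraic kill step used in Theorem~\ref{T0}, where $\cZ$-stability is ruptured by a local obstruction that cannot be undone further along the chain; the proof proposed here is essentially a translation of that step, substituting the tracial/WOT topology and the McDuff criterion for the $\mathrm{C}^*$-norm and the corresponding characterization of $\cZ$-stability.
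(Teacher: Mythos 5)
There is a genuine gap, and it is exactly the step you flag as ``the main obstacle'': the kill step is never actually supplied. Diagonalizing directly against partial isomorphisms $\ph_\alpha\colon M_\alpha\to M\,\bar\otimes\,\calR$ is much harder than you suggest, for two reasons. First, the target $M\,\bar\otimes\,\calR$ is itself being built, so a ``candidate image sequence $(\ph_\alpha(x_n))$'' is not yet meaningful at stage $\alpha$; one would need a reflection argument producing a club of $\alpha$ with $\ph[M_\alpha]$ densely contained in $M_\alpha\,\bar\otimes\,\calR$, plus an enumeration anticipating all such restrictions. Second, and more seriously, you give no mechanism by which adjoining a single unitary $u$ to $M_\alpha$ rules out \emph{every} extension of $\ph_\alpha$ to the eventual limit: as you yourself note, the approximate innerness available in $\calR$ lets later stages absorb naive local obstructions. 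Without a concrete device that makes the obstruction permanent under all further enlargements, the closing-off argument proves nothing.

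The paper takes a different and more tractable route: it never diagonalizes against isomorphisms at all. Instead it shows (Lemma~\ref{L0-vN}) that if the central sequence algebra of $M$ is abelian then $M$ is not $\calR$-stable --- this is the only place the McDuff-type criterion is used --- and then builds $M$ so that every central sequence is hypercentral (indeed trivial, under CH). The diagonalization is against \emph{central sequences}, which are countable objects living in some $M_\eta$ by uncountable cofinality, and the permanent kill is achieved by the pair Lemma~\ref{L.central} and Lemma~\ref{L.endo}: for a central but non-hypercentral sequence $(x_n)$ in an algebra of density character $<\fb$, one produces an endomorphism $\alpha$ with $\liminf_n\|x_n-\alpha(x_n)\|_2>0$ (this is where the set-theoretic hypothesis enters, via the characterization $\fb=\fb_1$ guaranteeing convergence of infinite products of inner automorphisms), and then passes to $M_2(M_\xi)$ where the identity $\|[\alpha^+(a),v]\|=\|a-\alpha(a)\|$ shows the image of $(x_n)$ fails to commute with the fixed unitary $v$ --- a failure that persists in every larger algebra. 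Your proposal would need to be replaced by, or reduced to, an argument of this kind; as written it records the intended outcome of the kill step without providing the construction that achieves it.
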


We prove these theorems by constructing algebras whose central sequence algebras are abelian.
In fact, we strengthen the construction in two directions, one in which the conclusion is strengthened (Theorems~\ref{T2} and \ref{T2-vN}) and another in which weaker set-theoretic axioms are assumed (Theorems~\ref{T1} and \ref{T1-vN}).
It seems unlikely that the conclusions of Theorems~\ref{T1} and \ref{T1-vN} --- or even the stronger conclusions of Theorems~\ref{T2} and \ref{T2-vN} --- are independent from ZFC.
The assumption of Theorems~\ref{T1} and \ref{T1-vN} is the cardinal equality $\fb=\fc$, where $\fb$ denotes the bounding number --- see \S\ref{S.2}, where  
we also give two new characterizations of $\fb$. 

A result similar to Theorem~\ref{T2-vN} was proven in 
\cite[Proposition~3.7 (1)]{FaHaSh:Model3}, where 
a II$_1$ factor $M$ is constructed using ZFC alone, 
whose central sequence is trivial, but every separable II$_1$ subfactor
of $M$ has property $\Gamma$
(this factor has density character $\aleph_1$, with respect to the strong operator topology).
Unlike the example in Theorem~\ref{T2-vN} (which shares the aforementioned properties), the von Neumann algebra in \cite[Proposition~3.7 (1)]{FaHaSh:Model3} is not hyperfinite, due to the use of the free product construction.
Likewise, a minor modification of the construction in \cite{FaHaSh:Model3} gives a monotracial $\mathrm{C}^*$-algebra whose central sequence algebra is abelian, yet whose non-type I, separable subalgebras each have nonabelian central sequence algebras; but again, the free product construction prevents this example from being nuclear, let alone having nuclear dimension zero.

This factor in Theorem~\ref{T0-vN} is clearly different from the `obvious' hyperfinite factor whose predual    has character
density $\aleph_1$. The existence of `nonobvious' hyperfinite II$_1$ factors with nonseparable preduals 
was first proved by Widom (\cite{Wid:Nonisomorphic}), 
and in \cite{FaKa:NonseparableII} it was proved that there are $2^\kappa$ nonisomorphic 
hyperfinite II$_1$ factors with predual of density character $\kappa$, for every 
uncountable cardinal~$\kappa$.

In \S\ref{S.1} we provide more information about the algebras we  construct and their central sequence algebras. 
In \S\ref{S.2} we give two new reformulations of the bounding number $\fb$, one in terms of 
 convergent   subseries of null sequences and another in terms of convergence of a sequence of 
 inner automorphisms of a $\mathrm{C}^*$-algebra. 
 The proofs of main results are completed in \S\ref{S.3}, using 
  transfinite recursive construction,

  and   in \S\ref{S.4} we give concluding remarks. 

Recall that \emph{density character} of a topological space $X$ is the minimal 
cardinality of a dense subset. Therefore $X$ is separable iff its density character is $\aleph_0$
and it has cardinality $\fc$ if and only if its density character is $\leq \fc$.
In our case, $X$ will either be a $\mathrm{C}^*$-algebra equipped with the norm topology, or a von Neumann algebra considered with the strong operator topology (SOT).
In fact, the density character of a von Neumann algebra $M$ is the same whether measured with respect to the strong operator topology or any other usual, non-norm, von Neumann algebra topology (eg.\ WOT, weak$^*$-topology); it also coincides with the density character of the predual $M_*$ under its norm topology.
All $\mathrm{C}^*$-algebras are assumed to be unital. 
Background 
can be found e.g., in \cite{Black:Operator} (for $\mathrm{C}^*$-algebras and von Neumann algebras) and 
in \cite{Bla:Cardinal} (for set theory).

\subsection*{Acknowledgments} 
The results of this paper were proven during the September 2012 Fields Institute Workshop on Applications to Operator Algebras, 
November 2012 Oberwolfach Workshop on $\mathrm{C}^*$-algebras, Dynamics and Classification, 
and in November 2012 back at the Fields Institute. 
I.F.\ is partially supported by NSERC,  D.H.\ is supported by the Fields Institute, and 
A.T.\ is supported by DFG (SFB 878).

\section{Locally matricial algebras}
\label{S.1}

Following \cite{FaKa:Nonseparable} and \cite{Kat:Non-separable}, we say that 
a $\mathrm{C}^*$-algebra $A$ is 
\begin{itemize}
\item \emph{approximately matricial}
(or \emph{AM})
if it has a directed family of
full matrix subalgebras with dense union.
\item \emph{locally matricial}
(or \emph{LM})
if for any finite subset $\F$ of $A$ and any $\e > 0$,
there exists a  full matrix subalgebra $M$ of $A$
such that for every $a\in \F$ we have $\dist(a,M)<\e$.
\item \emph{locally finite dimensional} (or \emph{LF})
if for any finite subset $\F$ of $A$ and any $\e > 0$,
there exists a  finite dimensional subalgebra $M$ of $A$
such that for every $a\in \F$ we have $\dist(a,M)<\e$.
\end{itemize}
Nuclear dimension, as defined in \cite{WinterZacharias:NucDim}, is a property that is preserved under local approximation; it follows that each LF algebra has nuclear dimension zero (in fact, the converse also holds \cite{WinterZacharias:NucDim}).
LM algebras are not necessarily AM (\cite[Theorem~1.5]{FaKa:Nonseparable}), although LM algebras are direct limits of separable UHF algebras (\cite[Lemma~2.12]{FaKa:Nonseparable}) 
LM algebras are simple.

Recall that $x_n$, for $n\in \bbN$, is a  \emph{central sequence} of a $\mathrm{C}^*$-algebra $A$
if $\|[x_n,a]\|\to 0$ for all $a\in A$. The \emph{central sequence algebra} of $A$ is a subalgebra of 
$\ell^\infty(A)/c_0(A)$ consisting of all equivalence classes of (bounded) central sequences.

\begin{lemma} \label{L0} 
If a central sequence algebra of a unital algebra 
$A$ is abelian then $A$ is not $\cZ$-stable.  
\end{lemma}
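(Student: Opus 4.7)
The plan is to prove the contrapositive: assuming $A$ is $\cZ$-stable, exhibit a unital copy of $\cZ$ inside the central sequence algebra of $A$. The two key ingredients would be (i) that $\cZ$ is strongly self-absorbing, so $\cZ \cong \cZ^{\otimes \infty}$, the unital inductive limit of finite tensor powers, and consequently $A \cong A \otimes \cZ$ upgrades to an isomorphism $\Phi \colon A \to A \otimes \cZ^{\otimes \infty}$; and (ii) that $\cZ$ is simple and infinite-dimensional, hence non-abelian.

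Using $\Phi$, for each $n \in \bbN$ I would define a unital $^*$-homomorphism $\iota_n \colon \cZ \to A$ by
\[
\iota_n(z) = \Phi^{-1}\bigl(1_A \otimes \underbrace{1_{\cZ} \otimes \cdots \otimes 1_{\cZ}}_{n-1 \text{ factors}} \otimes z \otimes 1_{\cZ} \otimes 1_{\cZ} \otimes \cdots\bigr),
\]
i.e.\ the image of $\cZ$ placed in the $n$-th tensor slot. The central claim is that for each fixed $z \in \cZ$, the sequence $(\iota_n(z))_{n\in\bbN}$ is a central sequence in $A$. Applying $\Phi$, this reduces to the observation that any $a \in A \otimes \cZ^{\otimes \infty}$ can be norm-approximated within $\e$ by a finite sum of elementary tensors $b \otimes w \otimes 1 \otimes 1 \otimes \cdots$ whose $\cZ^{\otimes \infty}$-parts $w$ are supported on the first $k$ tensor factors. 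Once $n > k$, the element $1_A \otimes 1^{\otimes (n-1)} \otimes z \otimes 1^{\otimes \infty}$ commutes exactly with the approximant, so a standard $3\e$-estimate yields $\|[\iota_n(z),a]\| \to 0$ for every $a \in A$.

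Given this, the map $z \mapsto [(\iota_n(z))_n] \in \ell^\infty(A)/c_0(A)$ is a well-defined unital $^*$-homomorphism from $\cZ$ into the central sequence algebra of $A$; it is injective by simplicity of $\cZ$. Since $\cZ$ is non-abelian, its unital image in the central sequence algebra is non-abelian, contradicting the hypothesis. The only piece requiring care is the centrality verification above, and this is a routine density argument once the infinite-tensor-product identification is set up; the rest is a direct invocation of strong self-absorption of $\cZ$ together with its non-abelianness.
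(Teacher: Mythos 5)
Your proof is correct and follows the same strategy as the paper: both exhibit a unital copy of the non-abelian algebra $\cZ$ inside the central sequence algebra of a $\cZ$-stable $A$. The paper factors this through two cited facts --- that the central sequence algebra of $B\otimes_\nu C$ contains that of $C$ as a unital subalgebra, and that the central sequence algebra of $\cZ$ contains a copy of $\cZ$ --- whereas you inline the second fact by working directly with $A\cong A\otimes\cZ^{\otimes\infty}$ and shifting $z$ through the tensor slots; the substance is the same.
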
 

\begin{proof} 
If $B$ and $C$ are unital $\mathrm{C}^*$-algebras then 
the central sequence algebra of $B\otimes_\nu C$
 (where $\otimes_\nu$ is any $\mathrm{C}^*$-tensor 
product) clearly has the central sequence algebra of $C$ as a unital subalgebra. The conclusion now follows from the fact that the central sequence algebra of $\cZ$ has a subalgebra isomorphic to~$\cZ$. 
\end{proof}

Every sequence of elements from the centre of a $\mathrm{C}^*$-algebra is obviously a central sequence.
We call the central sequence of a $\mathrm{C}^*$-algebra $A$ \emph{trivial} if it only consists of (equivalence classes of) such sequences (i.e.\ if it equals $(\ell^\infty(Z(A))+c_0(A))/c_0(A)$).

\begin{theorem}\label{T2} The Continuum Hypothesis implies that there exists
an AM algebra  all of whose central sequences are  trivial. 
\end{theorem}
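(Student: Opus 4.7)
The plan is a transfinite recursive construction of length $\omega_1$. I build $A=\bigcup_{\alpha<\omega_1}A_\alpha$ as a directed union of separable AM algebras under unital inclusions, and at every stage I kill a designated bounded self-adjoint sequence that is not equivalent modulo $c_0$ to a real scalar sequence. Restricting to self-adjoint sequences suffices because any non-trivial bounded central sequence has a real or imaginary part with the same property. Under CH each separable $A_\alpha$ has $\fc=\aleph_1$ bounded self-adjoint sequences, so by standard $\omega_1$-bookkeeping I can list all pairs $(\gamma,\vec x)$, with $\vec x$ such a sequence in $A_\gamma$, as $(\gamma_\alpha,\vec x^{(\alpha)})_{\alpha<\omega_1}$ with $\gamma_\alpha<\alpha$ and every pair appearing at some $\alpha>\gamma$.

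At a limit $\alpha$ set $A_\alpha:=\overline{\bigcup_{\beta<\alpha}A_\beta}$, which is separable and AM because $\alpha$ is countable and a directed union of AM algebras is AM. At a successor $\alpha+1$ inspect $\vec x^{(\alpha)}=(x_n)$: if $\dist(x_n,\bbR\cdot 1)\to 0$ take $A_{\alpha+1}:=M_2\otimes A_\alpha$ (a trivial extension), otherwise pass to a subsequence making $\dist(x_n,\bbR\cdot 1)\ge\delta$ for all $n$ and apply the following key lemma.

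\textbf{Key Lemma.} For any separable AM algebra $B$ and any self-adjoint sequence $(x_n)\subseteq B$ with $\dist(x_n,\bbR\cdot 1)\ge\delta$ for all $n$, there is a unital embedding $B\hookrightarrow C$ into a separable AM algebra and an element $c\in C$ with $\|[c,x_n]\|\ge 2\delta$ for every $n$. Take $C:=M_2\otimes B\otimes B$ (still AM), the twisted diagonal embedding $\phi(b):=e_{11}\otimes(b\otimes 1)+e_{22}\otimes(1\otimes b)$, and $c:=e_{12}\otimes(1\otimes 1)$. A direct computation gives $[c,\phi(b)]=e_{12}\otimes(1\otimes b-b\otimes 1)$, so $\|[c,\phi(x_n)]\|=\|x_n\otimes 1-1\otimes x_n\|$; since $x_n$ is self-adjoint, $x_n\otimes 1$ and $1\otimes x_n$ are commuting self-adjoints generating a commutative $\mathrm{C}^*$-subalgebra isomorphic to $C(\sigma(x_n)\times\sigma(x_n))$, under which $x_n\otimes 1-1\otimes x_n$ corresponds to $(s,t)\mapsto s-t$, so its norm equals $\diam\sigma(x_n)=2\dist(x_n,\bbR\cdot 1)\ge 2\delta$.

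At the end, $A$ is AM because the union over all $\alpha$ of the full matrix subalgebras of the $A_\alpha$'s is directed (any two lie in a common $A_\alpha$, itself AM) and has dense union. Simplicity of AM algebras forces $Z(A)=\bbC$, so triviality of the central sequence algebra just means every bounded central sequence is asymptotically scalar. If $(y_n)$ were a bounded central sequence in $A$ that fails to be asymptotically scalar, its self-adjoint real or imaginary part $(y_n')$ would still be central in $A$ and not asymptotically real-scalar; by uncountable cofinality of $\omega_1$ we would have $(y_n')\subseteq A_\gamma$ for some $\gamma<\omega_1$, hence $(y_n')=\vec x^{(\alpha)}$ for some $\alpha$, and the key lemma would have produced $c\in A_{\alpha+1}\subseteq A$ with $\|[c,y_n']\|\ge 2\delta$ along a subsequence, contradicting centrality. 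The main obstacle is setting up the bookkeeping so that every bounded sequence in the eventual $A$ is in fact visited by the enumeration---which hinges on the cofinality argument together with maintaining separability and AM-ness at every stage---whereas the key lemma itself reduces, once the right $C$ and twisted embedding are identified, to an explicit $2\times 2$ matrix commutator calculation in $B\otimes B$.
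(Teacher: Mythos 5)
Your core device is genuinely different from the paper's. The paper kills a central sequence $(x_n)$ by building an endomorphism $\alpha$ of $A_\xi$ as a point-norm limit of inner automorphisms $\Ad(v_1\cdots v_k)$ extracted from a second central sequence that fails to commute with $(x_n)$ (Lemma \ref{L.central}); this is exactly where the hypothesis that $(x_n)$ is not hypercentral, Proposition \ref{P.separable}, and the whole bounding-number analysis of \S\ref{S.2} enter. You instead apply the paper's Lemma \ref{L.endo} to the flip automorphism of $B\otimes B$ precomposed with $b\mapsto b\otimes 1$, and the identity $\|x\otimes 1-1\otimes x\|=\diam\sigma(x)=2\dist(x,\bbC\cdot 1)$ for self-adjoint $x$ (valid in the minimal, hence unique, tensor product of nuclear algebras) gives a witness uniformly for \emph{every} non-asymptotically-scalar self-adjoint sequence, central or not. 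The commutator computation and the persistence of witnesses under the (isometric, unital) connecting maps check out, and the $\omega_1$-bookkeeping and cofinality argument are the same as the paper's. Be aware, however, that your argument is suspiciously strong: it nowhere uses that the killed sequences are central or non-hypercentral, and the bookkeeping goes through verbatim with $\fc$ in place of $\aleph_1$, which would yield an LM algebra with trivial central sequence algebra in ZFC alone --- precisely the paper's Question 4.3, which the authors state as open. You should therefore scrutinize the tensor-square step with great care before trusting it; at minimum it deserves an explicit verification that the twisted diagonal embeddings compose into a bona fide directed system whose limit retains all the claimed properties.

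There is one concrete gap: your argument that the limit $A$ is AM is wrong as stated. The collection of \emph{all} full matrix subalgebras of the $A_\alpha$'s is not directed: two unital copies of $M_2$ inside a UHF algebra need not be contained in a common finite-dimensional subalgebra (they can generate an infinite-dimensional one), so ``both lie in a common $A_\beta$, which is AM'' does not produce a full matrix algebra containing both. What your construction actually yields directly is that $A$ is LM (every finite subset is approximately contained in some $A_\alpha$, which is UHF); to conclude AM you must invoke the fact that an LM algebra of density character $\aleph_1$ is AM, i.e.\ \cite[Theorem~1.5]{FaKa:Nonseparable}, exactly as the paper does at the end of its proof of Theorem~\ref{T2}. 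With that citation substituted for your directedness claim, and with the caveat of the previous paragraph resolved, the rest of your outline (reduction to self-adjoint sequences, the $2^{-n}$-perturbation into a single $A_\gamma$ via uncountable cofinality, and simplicity forcing $Z(A)=\bbC$) is sound.
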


By Lemma~\ref{L0}, Theorem~\ref{T0} will follow once Theorem~\ref{T2} is proved in \S\ref{S.3}.  

Strengthening Theorem~\ref{T0} in another direction we show that an assumption weaker than both the Continuum Hypothesis and  Martin's Axiom suffices. 
We write  $\fc=2^{\aleph_0}$ and $\fb$ for the bounding number (see  \S\ref{S.2} for definitions).

\begin{theorem} \label{T1} 
If $\fb=\fc$ then   there exists a LM algebra whose central sequence algebra is abelian. 
\end{theorem}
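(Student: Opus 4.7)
The proof will proceed by transfinite recursion of length $\fc$, building an increasing directed system $(A_\alpha)_{\alpha<\fc}$ of separable UHF $\mathrm{C}^*$-algebras whose inductive limit $A$ will be the desired LM algebra; local matriciality then follows automatically from \cite[Lemma~2.12]{FaKa:Nonseparable}. To arrange that the central sequence algebra of $A$ is abelian, the construction must address every pair of bounded sequences $(x_n), (y_n)$ in $A$ that could potentially represent two non-commuting central sequences. Since $\operatorname{cf}(\fc) > \aleph_0$ in ZFC, each such pair is, modulo $c_0$-equivalence, realized within some $A_\alpha$, so a standard bookkeeping surjection of $\fc$ onto $\fc \times \fc$ can be set up to list every relevant pair at some stage.

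At the successor stage $\alpha + 1$, the bookkeeping supplies a pair of bounded sequences in some $A_\beta \subseteq A_\alpha$ with $\limsup_n \|[x_n, y_n]\| > 0$, and the task is to enlarge $A_\alpha$ to a separable UHF algebra $A_{\alpha+1}$ in which at least one of these sequences is no longer asymptotically central. The standard trick here, already implicit in the proof of Theorem~\ref{T2} and in the $\cZ$-stability-destroying constructions under CH, is to adjoin a tensor factor (a small matrix algebra) carrying an element that serves as a permanent witness to non-centrality of one of the two sequences; since $A_\alpha$ is separable UHF and one is tensoring by a full matrix algebra, $A_{\alpha+1}$ is again separable UHF, and the embedding is unital so earlier witnesses persist.

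The principal obstacle, and the place where the hypothesis $\fb = \fc$ is essential, is at limit stages $\lambda < \fc$ of uncountable cofinality. Under CH one has $\fc = \aleph_1$ and no limit stage below $\fc$ has uncountable cofinality, so limits are handled trivially by countable inductive unions. In the general case, one must pass to a UHF inductive limit of a system of possibly uncountably many previously constructed pieces while simultaneously preserving (i) the UHF structure, (ii) all prior non-centrality witnesses, and (iii) compatibility with the ongoing bookkeeping. This is where the new reformulation of $\fb$ from \S\ref{S.2}, in terms of convergence of sequences of inner automorphisms, is applied: any family of fewer than $\fb = \fc$ sequences of inner automorphisms admits a single sequence that dominates them in the appropriate sense, and this provides exactly the combinatorial input needed to merge the accumulated demands into a single separable UHF stage $A_\lambda$ without disrupting anything that came before.

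After the recursion concludes, set $A := \overline{\bigcup_{\alpha<\fc} A_\alpha}$. Because $\operatorname{cf}(\fc) > \aleph_0$, every bounded sequence in $A$ is $c_0$-equivalent to one in some $A_\alpha$, so every pair of bounded sequences with non-vanishing commutator was addressed by the bookkeeping and at least one of the two fails to be a central sequence in $A$. Thus no two central sequences of $A$ can fail to commute asymptotically, and the central sequence algebra of $A$ is abelian, completing the proof of Theorem~\ref{T1}.
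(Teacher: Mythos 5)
Your high-level skeleton --- transfinite recursion of length $\fc$ with a bookkeeping surjection onto $\fc^2$, and the use of $\operatorname{cf}(\fc)>\aleph_0$ to catch every sequence in some stage --- does match the paper. But the two steps that actually carry the proof are either missing or wrong. First, the successor step. Adjoining a matrix tensor factor via a unital embedding cannot ``destroy centrality'': if $(x_n)$ is central in $A_\alpha$, then $(x_n\otimes 1)$ is still central in $A_\alpha\otimes M_k$, and no element of the new factor witnesses anything, since it commutes with $A_\alpha\otimes 1$. The mechanism the paper uses (Lemma~\ref{L.endo}) is a \emph{twisted} diagonal embedding $a\mapsto\operatorname{diag}(a,\alpha(a))$ of $A_\xi$ into $M_2(A_\xi)$, where $\alpha$ is an endomorphism of $A_\xi$ with $\liminf_n\|x_n-\alpha(x_n)\|>0$; then the flip unitary $v$ satisfies $\|[\operatorname{diag}(x_n,\alpha(x_n)),v]\|=\|x_n-\alpha(x_n)\|\not\to 0$, so the image of $(x_n)$ is no longer central. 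Your proposal never produces such an $\alpha$, and without it the recursion does not make progress.

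Second, you place the use of $\fb=\fc$ at limit stages, but in the paper the limit stages are completely trivial (one just takes the inductive limit; there is no requirement that the stages stay separable --- $A_\xi$ has density character up to $|\xi|+\aleph_0$, and the final algebra has density character $\fc$). The hypothesis $\fb=\fc$ is used at the \emph{successor} stages, inside Lemma~\ref{L.central}: given a central sequence of unitaries $(u_n)$ witnessing that $(x_n)$ is not hypercentral, one needs a subsequence $(v_{n(i)})$ along which the products $\Ad(v_{n(1)}\cdots v_{n(k)})$ converge pointwise to an endomorphism $\alpha$; the characterization $\fb=\fb_1=\fb_2$ of Proposition~\ref{P.b} says exactly that this is possible whenever the density character of the current stage is $<\fb$, which is guaranteed at stage $\xi<\fc=\fb$. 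Relatedly, you should only attempt to kill central sequences that are \emph{not} hypercentral (hypercentral sequences cannot be decentralized and do not obstruct abelianness); your pair-based bookkeeping can be made to respect this, but the distinction needs to be made explicit, as does the commuting-refinement step (Lemma~\ref{L.ZZ}) that lets the infinite product of inner automorphisms act on $x_n$ simply as $\Ad v_n$.
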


By Lemma~\ref{L0} and Theorem~\ref{T2} (once the latter is proved) 
 the conclusion of Theorem~\ref{T0} also follows from $\fb=\fc$. 

A sequence, $x_n$ for $n \in \bbN$, in a von Neumann algebra $M$ is said to be central if $[x_n,a] \to 0$ in the strong operator topology, for every $a \in M$.
The central sequence algebra of $M$ is the subalgebra of
\[ \ell^\infty(A)/\{(x_n) | \mathrm{SOT}-\lim x_n = 0\} \]
consisting of equivalence classes of central sequences.
A trivial central sequence is one in the same equivalence class as a sequence from $Z(M)$.
Proven in the same manner, we have the following analogue of Lemma \ref{L0}.

\begin{lemma} \label{L0-vN}
If the central sequence algebra of a von Neumann algebra $M$ is abelian then $M$ is not $\calR$-stable.
\end{lemma}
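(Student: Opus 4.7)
The plan is to replicate the proof of Lemma~\ref{L0} verbatim, with the following translations: replace the $\mathrm{C}^*$-tensor product by the spatial von Neumann tensor product $\bar\otimes$, norm convergence by SOT-convergence, and $\cZ$ by $\calR$. The two ingredients, in order, are (i) a tensor-factor embedding of central sequence algebras, and (ii) the fact that $\calR$ embeds into its own central sequence algebra.

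First I would verify: for any unital von Neumann algebras $N$ and $P$, the map $[(x_n)]\mapsto[(1\otimes x_n)]$ is a unital embedding of the central sequence algebra of $P$ into that of $N\bar\otimes P$. Given a bounded sequence $(x_n)$ in $P$ with $[x_n,c]\to 0$ in SOT for each $c\in P$, one has $[1\otimes x_n,\,b\otimes c]=b\otimes[x_n,c]\to 0$ in SOT for elementary tensors. Since the linear span of elementary tensors is SOT-dense in $N\bar\otimes P$ and $(1\otimes x_n)$ is norm-bounded, this extends to asymptotic SOT-commutation with arbitrary $a\in N\bar\otimes P$ by a standard $\varepsilon/3$ estimate.

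For (ii), since $\calR$ is McDuff, $\calR\cong\calR\bar\otimes\calR$; applying step (i) with $N=P=\calR$ produces a copy of $\calR$ inside the central sequence algebra of $\calR$, so this central sequence algebra is nonabelian. Finally, if $M$ were $\calR$-stable, then $M\cong M\bar\otimes\calR$ and step (i) would embed the nonabelian central sequence algebra of $\calR$ into that of $M$, contradicting the hypothesis.

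The one mild obstacle is the density step in (i): since the SOT is not metrizable on unbounded sets, one cannot simply invoke continuity. However, norm-boundedness of $(1\otimes x_n)$ together with a reduction to SOT-convergence against a fixed finite collection of vectors makes the argument a routine triangle-inequality computation. Everything else is a mechanical translation of the proof of Lemma~\ref{L0}.
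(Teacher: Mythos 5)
Your overall plan is exactly the paper's: the paper proves Lemma~\ref{L0-vN} simply by remarking that it is ``proven in the same manner'' as Lemma~\ref{L0}, and your step (i) together with your final paragraph is precisely that translation, including a correct handling of the SOT-density issue via norm-boundedness and an $\varepsilon/3$ argument.

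However, step (ii) as written does not establish what you need. Applying your step (i) with $N=P=\calR$ yields an embedding of the central sequence algebra of $\calR$ into the central sequence algebra of $\calR\bar\otimes\calR\cong\calR$, i.e.\ an embedding of that algebra into itself; it does not produce a copy of $\calR$, nor any noncommutativity, inside it. The fact you actually need --- the analogue of the paper's appeal to ``the central sequence algebra of $\cZ$ has a subalgebra isomorphic to $\cZ$'' --- is that $\calR$ itself embeds unitally into its own central sequence algebra. This is standard but uses a different mechanism from (i): write $\calR\cong M_2^{\bar\otimes\infty}$ (or $\calR^{\bar\otimes\infty}$) and send $x$ to the class of the sequence $(x_n)$, where $x_n$ is the copy of $x$ sitting in the $n$-th tensor factor. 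Such a sequence commutes exactly with any fixed element of the algebraic tensor product for all large $n$, hence is central by the same boundedness/density argument you already use in (i), and $x\mapsto[(x_n)]$ is a unital embedding. (For the lemma one in fact only needs two noncommuting central sequences, e.g.\ two noncommuting unitaries of $M_2$ placed in the $n$-th factor of $M_2^{\bar\otimes\infty}$.) With this repair the rest of your argument goes through verbatim.
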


We will make great use of the fact that, when $M$ has a faithful trace $\tau$, then the strong operator topology coincides with the topology induced by the norm $\|\cdot\|_2$, given by
\[ \|x\|_2 := \tau(x^*x)^{1/2}; \]
indeed, adapting our main construction from the $\mathrm{C}^*$-setting to the von Neumann setting will primarily consist in using $\|\cdot\|_2$ in place of $\|\cdot\|$.
Here are the von Neumann versions of Theorems \ref{T2} and \ref{T1}.

\begin{theorem}\label{T2-vN} The Continuum Hypothesis implies that there exists
a hyperfinite II$_1$ factor all of whose central sequences are  trivial. 
\end{theorem}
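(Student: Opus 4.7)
The plan is to adapt the $\mathrm{C}^*$-algebraic construction of Theorem \ref{T2} to the von Neumann setting by replacing the operator norm with $\|\cdot\|_2$ and norm-closures with SOT-closures (equivalently, double commutants). Assuming CH, perform a transfinite recursion of length $\omega_1$ producing a chain $(M_\alpha)_{\alpha<\omega_1}$ of separable copies of $\calR$ with trace-preserving unital inclusions $M_\alpha\subset M_{\alpha+1}$, taking $M_\lambda:=(\bigcup_{\alpha<\lambda}M_\alpha)''$ at countable limits. The target factor is $M:=(\bigcup_{\alpha<\omega_1}M_\alpha)''$: a II$_1$ factor (SOT-closure of an increasing chain of II$_1$ factors sharing a trace), hyperfinite (each $M_\alpha$ is), of density character $\aleph_1$.

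Under CH each $M_\alpha$ has cardinality $\aleph_1$, so the collection of pairs $(\alpha,(x_n))$ with $\alpha<\omega_1$ and $(x_n)$ a bounded sequence in $M_\alpha$ has cardinality $\aleph_1$, and a standard bookkeeping enumerates them in an $\omega_1$-list. Any sequence $(y_n)$ in $M$ has entries distributed over countably many $M_{\gamma_n}$; by the uncountable cofinality of $\omega_1$, $(y_n)\subset M_\alpha$ for some $\alpha<\omega_1$, so every such sequence is addressed at some stage. At successor stage $\beta+1$ the bookkeeping presents a sequence $(x_n)\subset M_\alpha$ (some $\alpha\leq\beta$) with $\liminf_n\|x_n-\tau(x_n)\|_2\geq\delta>0$; the task is to build $M_{\beta+1}\supset M_\beta$ (still a separable hyperfinite II$_1$ factor) containing $y$ with $\|[x_n,y]\|_2\not\to 0$, so that $(x_n)$ fails centrality in $M$. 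If $(x_n)$ is already non-central in $M_\beta$ there is nothing to do, so assume $(x_n)$ is central in $M_\beta$.

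The construction is a twisted-diagonal embedding: set $M_{\beta+1}:=M_\beta\bar\otimes M_2(\bbC)$ (still a separable copy of $\calR$) and embed $M_\beta$ via
\[ \iota(a):=a\otimes e_{11}+\alpha(a)\otimes e_{22} \]
for a trace-preserving $\alpha\in\Aut(M_\beta)$ to be chosen. With $y:=1\otimes e_{12}$, a direct computation yields
\[ \|[\iota(x_n),y]\|_2 = 2^{-1/2}\|x_n-\alpha(x_n)\|_2, \]
so it suffices to produce $\alpha$ with $\|\alpha(x_n)-x_n\|_2$ bounded below on an infinite subset of $n$.

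The main obstacle is producing such $\alpha$, which the matricial structure of $\calR$ makes possible. Write $M_\beta$ as the SOT-closure of an increasing tower of matrix subalgebras $N_k$, giving tensor splittings $\calR\cong N_k\bar\otimes(\calR\cap N_k')$; centrality of $(x_n)$ forces $\|\E_{N_k}(x_n)-\tau(x_n)\cdot 1\|_2\to 0$ as $n\to\infty$ for each fixed $k$. Pass to a subsequence so that $x_n$ is approximated in $\|\cdot\|_2$ by some $x_n'\in N_{k(n)}\cap N_{k(n-1)}'$ with $\|x_n'-\tau(x_n')\|_2\geq\delta/2$; a standard averaging inside this finite-dimensional algebra provides a unitary $u_n\in N_{k(n)}\cap N_{k(n-1)}'$ with $\|u_nx_n'u_n^*-x_n'\|_2\geq c\delta$ for an absolute $c>0$. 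Since the $u_n$ lie in mutually commuting finite-dimensional subalgebras, the infinite product $u:=\prod_n u_n$ converges in $\|\cdot\|_2$ to a unitary in $\calR$, and $\alpha:=\Ad(u)$ satisfies $\|\alpha(x_n)-x_n\|_2\geq c\delta/2$ eventually. Once the recursion completes, every central sequence in $M$ lies in some $M_\alpha$ and was treated at a later stage; $M$ being a factor, triviality coincides with $\|x_n-\tau(x_n)\|_2\to 0$, so all central sequences of $M$ are trivial.
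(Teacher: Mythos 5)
Your construction follows the same architecture as the paper's: a transfinite recursion of length $\omega_1$ under CH with standard bookkeeping, and at each successor stage the twisted diagonal embedding $a\mapsto a\otimes e_{11}+\alpha(a)\otimes e_{22}$ (the paper's Lemma~\ref{L.endo}) to decentralize the designated nontrivial central sequence. Where you diverge is in how the automorphism $\alpha$ moving $(x_n)$ is produced. The paper routes through Lemma~\ref{L.central} and Remark~\ref{R.b-vN}, i.e.\ through the bounding-number characterization $\fb=\fb_1$: it first extracts a central sequence of unitary witnesses to non-hypercentrality (Proposition~\ref{P.separable}) and then invokes density character $<\fb$ to make the infinite product of inner automorphisms converge. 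You instead build the unitaries $u_n$ directly inside the successive relative commutants $N_{k(n-1)}'\cap N_{k(n)}$ of a matricial tower for the separable factor $M_\beta$, which makes the pointwise convergence of $\Ad(u_1\cdots u_k)$ automatic (these maps are eventually constant on each $N_m$, hence on a $\|\cdot\|_2$-dense subalgebra) and bypasses $\fb$ entirely. This is a legitimate simplification, available precisely because under CH every stage is separable; it effectively merges the paper's Lemma~\ref{L.ZZ}, Lemma~\ref{L.central} and Proposition~\ref{P.separable} into a single step. The averaging estimates you use (that centrality forces $\E_{N_k}(x_n)$ toward scalars, and that a non-scalar element of a matrix block is moved by some unitary of that block by at least its distance to the scalars in $\|\cdot\|_2$) are correct.

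One claim is wrong as stated and should be repaired: the infinite product $u=\prod_n u_n$ need \emph{not} converge in $\|\cdot\|_2$, and $\alpha$ need not be inner. Since the $u_n$ sit in orthogonal tensor blocks, $\|u_1\cdots u_{k+1}-u_1\cdots u_k\|_2=\|u_{k+1}-1\|_2$, which stays bounded away from $0$ whenever $\tau(u_{k+1})$ stays away from $1$; for instance $u_n=\diag(1,-1)$ in each $M_2$ block gives $\|u_{k+1}-1\|_2=\sqrt2$ for every $k$, and the limiting automorphism is the standard outer automorphism of $\calR$. The fix costs nothing: define $\alpha$ not as $\Ad(u)$ but as the pointwise $\|\cdot\|_2$-limit of the inner automorphisms $\Ad(u_1\cdots u_k)$ (equivalently, the infinite tensor product of the block automorphisms $\Ad(u_n)$), which is a well-defined trace-preserving automorphism of $\calR$ and still satisfies $\alpha(x_n')=u_nx_n'u_n^*$ because the blocks commute. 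Everything downstream --- the lower bound on $\|\alpha(x_n)-x_n\|_2$, the commutator identity $\|[\iota(x_n),y]\|_2=2^{-1/2}\|x_n-\alpha(x_n)\|_2$, and the persistence of non-centrality into $M$ via uncountable cofinality --- goes through unchanged.
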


\begin{theorem} \label{T1-vN}
If $\fb=\fc$ then   there exists a hyperfinite II$_1$ factor whose central sequence algebra is abelian, and in particular, which is not $\calR$-stable.
\end{theorem}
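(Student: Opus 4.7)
The plan is to adapt the transfinite construction proving Theorem \ref{T1} from the $\mathrm{C}^*$-setting to the tracial von Neumann setting, systematically replacing the operator norm with the trace $2$-norm $\|\cdot\|_2$ as the introduction suggests. The factor $M$ will be built as the SOT-closure of a continuous increasing chain $(M_\alpha)_{\alpha<\fc}$ of separable hyperfinite II$_1$ subfactors. Hyperfiniteness of $M$ is inherited from the chain provided each $M_\alpha \hookrightarrow M_{\alpha+1}$ is realized by a matricial extension within the hyperfinite framework; factoriality is preserved under cofinal SOT-unions of factors; and the density character is $\fc$ by construction.

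The recursive engine operates as follows. At stage $\alpha<\fc$, given $M_\alpha$ together with a countable $\|\cdot\|_2$-dense $*$-subset $D_\alpha$, I would enumerate along a $\leq^*$-increasing cofinal family $\{f_\xi : \xi<\fc\}\subseteq \bbN^\bbN$, whose existence follows from $\fb=\fc$, all pairs of bounded sequences drawn from $\bigcup_{\beta\leq\alpha} D_\beta$. For a pair $(x_n),(y_n)$ with $\|[x_n,y_n]\|_2\not\to 0$, I would ensure that at least one of the two sequences fails to be central in the final $M$, by constructing $M_{\alpha+1}\supseteq M_\alpha$ as a separable hyperfinite extension containing a unitary $u$ with, say, $\|[u,x_n]\|_2\not\to 0$. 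The two reformulations of $\fb$ from $\S\ref{S.2}$ enter here: the characterization in terms of convergence of a sequence of inner automorphisms of a $\mathrm{C}^*$-algebra allows the witnessing unitaries at each stage to be chosen compatibly with all previous choices, so that a witness adjoined at stage $\alpha$ is not undone by the later extensions; the convergent-subseries-of-null-sequences characterization controls the cumulative $\|\cdot\|_2$-error accumulated along the way, which is what lets one preserve hyperfiniteness and factoriality at limit stages.

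The main obstacle is the bookkeeping. Every bounded pair of sequences in $M$ is $\|\cdot\|_2$-approximable by a pair drawn from $\bigcup_{\alpha<\fc} D_\alpha$, but such an approximating pair must be processed at a stage sufficiently late that both its terms already lie in some $M_\alpha$, and sufficiently early that the needed witness can still be placed into $M_{\alpha+1}$ without conflicting with earlier commitments. This is precisely where $\fb=\fc$ is essential: it supplies a cofinal family of length $\fc$ to index the enumeration, allowing a diagonalization that simultaneously defeats every candidate non-commuting pair of central sequences in $M$. Once the construction is complete, every pair of central sequences of $M$ commutes, so the central sequence algebra is abelian, and Lemma \ref{L0-vN} concludes that $M$ is not $\calR$-stable.
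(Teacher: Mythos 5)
Your overall skeleton --- a transfinite chain of length $\fc$ with bookkeeping, defeating every non-commuting pair of central sequences, then concluding via Lemma~\ref{L0-vN} --- is the paper's approach. But there is a genuine gap at the one step where all the work lies: you assert that $M_{\alpha+1}$ can be built as ``a separable hyperfinite extension containing a unitary $u$ with $\|[u,x_n]\|_2\not\to 0$,'' without saying how such a $u$ is produced. This is not routine. The only witness to the non-hypercentrality of $(x_n)$ is another central \emph{sequence} $(y_n)$, and no single element of $M_\alpha$ can fail to asymptotically commute with a sequence that is central in $M_\alpha$. The paper's mechanism is: perturb $(y_n)$ to mutually $^*$-commuting unitaries $(v_n)$ (Lemma~\ref{L.ZZ}); use the fact that $M_\alpha$ has density character $<\fb=\fb_1$ (Proposition~\ref{P.b}, Remark~\ref{R.b-vN}) to pass to a subsequence so that $\prod_{j\le k}\Ad v_j$ converges pointwise to an endomorphism $\alpha$ with $\liminf_n\|x_n-\alpha(x_n)\|_2>0$ (Lemma~\ref{L.central}); and then embed $M_\alpha$ into $M_2(M_\alpha)$ via $a\mapsto\diag(a,\alpha(a))$, so that the flip unitary of Lemma~\ref{L.endo} is the required single-element witness, permanent under all further extensions. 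None of this appears in your proposal, and without it the recursive step does not go through.

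Relatedly, you misplace where $\fb=\fc$ enters. It is not needed to index the bookkeeping (a ZFC surjection $\fc\to\fc^2$ suffices for that), nor does the convergent-subseries characterization $\fb_2$ ``control cumulative error at limit stages'' --- hyperfiniteness and factoriality at limits are handled by taking weak closures of increasing unions and require no cardinal arithmetic. The hypothesis $\fb=\fc$ is used solely to guarantee that each intermediate algebra $M_\alpha$ (of density character $\le|\alpha|+\aleph_0<\fc$) has density character strictly below $\fb$, which is exactly what makes the infinite product of inner automorphisms in Lemma~\ref{L.central} converge; the $\fb_2$ characterization is an ingredient of the proof of $\fb=\fb_1$, not of the limit stages. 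Likewise, the witness adjoined at stage $\alpha$ is ``not undone later'' simply because it is a fixed unitary whose commutator norms with the $x_n$ are computed identically in every larger algebra --- no compatibility of choices across stages is needed. One point you do raise correctly, which the paper treats tersely, is that in the von Neumann setting a central sequence of $M$ need only be $\|\cdot\|_2$-approximated by sequences from the union of the $M_\alpha$, so one must check that defeating the approximants defeats the original sequence.
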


\section{New characterizations of the bounding number} 
\label{S.2}

In the present section we provide alternative characterizations of the bounding number, $\fb$.
Readers interested only in the proofs of Theorems~\ref{T2} and \ref{T2-vN}
can skip ahead to \S\ref{S.3}, modulo the  facts that $\fb_1$ defined below is uncountable (this is not hard to show directly, or alternatively, using $\fb_2 \leq \fb_1$ and the uncountability of $\fb_2$). 
Given two functions $f, g : \bbN \to \bbN$,
 we say that $f$ \emph{eventually dominates} $g$
 if $f(n) \ge g(n)$ for all but finitely many
 $n \in \bbN$.

Recall
that the \emph{bounding number}, which we denote by $\fb$, is the minimal cardinality $\kappa$ such that there exist functions $f^\xi:\bbN \to \bbN$ for $\xi < \kappa$, such that for every $g:\bbN \to \bbN$, there exists some $f^\xi$ which isn't eventually dominated by $g$.

Let $\fb_1$ be the minimal cardinal $\kappa$ such that there exists a unital $\mathrm{C}^*$-algebra $A$ of density character $\kappa$ and a central sequence of unitaries $u_n$, for $n\in \bbN$, such that 
the following holds. For every  subsequence $u_{n(i)}$, for $i\in \bbN$, with 
\[
v_k:=u_{n(1)} u_{n(2)} \dots u_{n(k)},
\]
we have that the sequence of inner automorphisms $\Ad v_k$, for $k\in \bbN$, does not converge pointwise 
on $A$. 

Let $\fb_1(\LF)$ be the minimal $\kappa$ with the above property where we additionally 
require that 
$A$ is LF. 

Let $\fb_2$ be the minimal cardinal $\kappa$ such that there are sequences $r^\xi_n$, for $n\in \bbN$
and $\xi<\kappa$, satisfying 
\begin{enumerate}
\item\label{r1}  $r^\xi_n\in (0,\infty)$ for all $\xi$ and $n$, 
\item\label{r2}  $\lim_n r^\xi_n=0$ for all $\xi$, and
\item there is no infinite increasing sequence $n(i)$, for $i\in \bbN$, of natural numbers such that 
$\sum_i r^\xi_{n(i)}<\infty$ for all $\xi$. 
\end{enumerate}
It is well-known that Martin's Axiom for $\sigma$-centered posets implies $\fb=\fc$ (see e.g., \cite{Fr:MA}).   

Readers familiar with the generalized Galois--Tukey connections will 
have no trouble in recasting the above definitions in this framework and observing 
that each part of Lemma~\ref{L.ConvBound} asserts the existence of a morphism in the terminology 
of \cite[\S 4]{Bla:Cardinal}.

\begin{prop} \label{P.b} We have $\fb = \fb_1=\fb_1(\LF)=\fb_2$. 
\end{prop}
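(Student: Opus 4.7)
The plan is to establish the cycle
\[ \fb \;\le\; \fb_2 \;\le\; \fb_1 \;\le\; \fb_1(\LF) \;\le\; \fb. \]
The inequality $\fb_1 \le \fb_1(\LF)$ is immediate, as every LF algebra is a unital $\mathrm{C}^*$-algebra. For $\fb \le \fb_2$, from null sequences $(r^\xi_n)_{\xi < \fb_2}$ witnessing $\fb_2$, I define
\[ f^\xi(k) := \min\{N : r^\xi_n < 2^{-k} \text{ for all } n \ge N\}, \]
which is well-defined because $r^\xi$ is null. If some $g$ eventually dominated every $f^\xi$, then the strictly increasing sequence $n(i) := \max(n(i-1)+1,\, g(i))$ would satisfy $r^\xi_{n(i)} < 2^{-i}$ for all large $i$, hence $\sum_i r^\xi_{n(i)} < \infty$ for every $\xi$ --- contradicting the choice of the $r^\xi$. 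So $\{f^\xi\}$ is unbounded and $\fb \le \fb_2$.

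For $\fb_2 \le \fb_1$, take a $\mathrm{C}^*$-algebra $A$ of density character $\fb_1$ with central sequence of unitaries $(u_n)$ as in the definition of $\fb_1$, and fix a self-adjoint norm-dense set $D \subseteq A$ of size $\fb_1$ containing every $u_m$. Setting $r^a_n := \|[u_n, a]\| + 2^{-n}$ yields positive null sequences indexed by $a \in D$. Suppose for contradiction some strictly increasing $n(i)$ has $\sum_i r^a_{n(i)} < \infty$ for every $a \in D$. Since each $u_{n(j)} \in D$, this gives $\sum_i \|[u_{n(i)}, u_{n(j)}]\| < \infty$ for every fixed $j$; coupled with $\|[u_m, u_{n(j)}]\| \to 0$ as $m \to \infty$, a diagonal thinning extracts a sub-subsequence $m(i) = n(i_k)$ satisfying $\sum_k \sum_{j<k} \|[u_{m(j)}, u_{m(k)}]\| < \infty$. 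Using the elementary product rule $\|[v_{k-1}, u]\| \le \sum_{j<k} \|[u_{m(j)}, u]\|$, with $v_k := u_{m(1)} \cdots u_{m(k)}$ one gets
\[ \|\Ad v_k(a) - \Ad v_{k-1}(a)\| \le \|[u_{m(k)}, a]\| + 2\|a\| \sum_{j<k} \|[u_{m(j)}, u_{m(k)}]\|, \]
whose right-hand side is summable in $k$. Thus $(\Ad v_k(a))_k$ is Cauchy for every $a \in D$, so $\Ad v_k$ converges pointwise on all of $A$ by density. Since $m$ is a subsequence of $\bbN$, this contradicts the defining property of $A$. Hence $\{r^a : a \in D\}$ witnesses $\fb_2$, giving $\fb_2 \le |D| = \fb_1$.

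The remaining inequality $\fb_1(\LF) \le \fb$ is the constructive step. From a bounding family $(f^\xi)_{\xi < \fb}$ of strictly increasing functions $\bbN \to \bbN$ I will build an LF algebra $A$ of density character $\le \fb$ equipped with a mutually commuting central sequence of self-adjoint unitaries $(u_n)$. The starting point is the CAR algebra $A_0 := \bigotimes_n M_2$ with $u_n := \diag(1,-1)$ in the $n$-th tensor factor; these are self-adjoint order-two unitaries that are central in $A_0$. One then enlarges $A_0$ by a coherent directed-limit construction of matricial subalgebras --- in the spirit of the nonseparable LM constructions of \cite{FaKa:Nonseparable,Kat:Non-separable} --- adjoining auxiliary elements $a^\xi \in A$ for each $\xi < \fb$ so that the increments $\|\Ad v_k(a^\xi) - \Ad v_{k-1}(a^\xi)\|$ fail to be summable in $k$ whenever the subsequence $n$ does not eventually dominate $f^\xi$. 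The bounding property of $\{f^\xi\}$ then ensures that for every subsequence $n(i)$ some $\xi$ witnesses failure of pointwise convergence of $\Ad v_k$. The main technical obstacle is realizing the $a^\xi$ inside a genuinely LF (rather than merely separable UHF) extension of $A_0$ while keeping the $(u_n)$ central, since in any separable UHF extension every $\Ad v_k$ would automatically converge on the dense set of finite tensors.
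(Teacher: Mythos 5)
Your reductions $\fb \le \fb_2$, $\fb_2 \le \fb_1$ and $\fb_1 \le \fb_1(\LF)$ are correct and essentially the paper's, though in $\fb_2 \le \fb_1$ you work harder than necessary: since $v_k = u_{n(1)}\cdots u_{n(k)}$ gives $\Ad v_k = \Ad v_{k-1}\circ \Ad u_{n(k)}$, the increment $\|\Ad v_k(a) - \Ad v_{k-1}(a)\| = \|\Ad v_{k-1}(\Ad u_{n(k)}(a)-a)\|$ equals $\|[u_{n(k)},a]\|$ exactly, so the cross-commutators $\|[u_{m(j)},u_{m(k)}]\|$ and the diagonal thinning are superfluous (your stated inequality is still true, just an overestimate).

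The genuine gap is the closing inequality. You explicitly defer ``the main technical obstacle'' of realizing the decentralizing witnesses $a^\xi$ inside an LF algebra while keeping $(u_n)$ central, and that obstacle is precisely the content of the paper's Lemma~\ref{L.CarSeq}; the proof is therefore incomplete exactly where the real work lies. The paper's resolution (it proves $\fb_1(\LF)\le\fb_2$ rather than $\le\fb$ directly, which is interchangeable via Lemma~\ref{L.ConvBound}) is this: inside $M_2(M_2^{\otimes\infty})$, take $u_n := \alpha^+(u_n')$ where $u_n'$ is $\diag(1, e^{2\pi i r_n})$ in the $n$-th tensor factor and $\alpha^+$ is the two-by-two matrix trick of Lemma~\ref{L.endo}, and let $v$ be the off-diagonal flip. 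Then $\Ad(u_{n(1)}\cdots u_{n(k)})(v) = \exp\bigl(2\pi i\sum_{j\le k} r_{n(j)}\bigr)\,v$, so convergence of the conjugates is equivalent, via Lemma~\ref{L.RT} and the fact that $(r_n)$ is null, to convergence of $\sum_j r_{n(j)}$ in $\bbR$. This is exactly what evades the problem you correctly diagnose: with your $u_n = \diag(1,-1)$ in independent tensor factors, $\Ad v_k$ stabilizes on every finite tensor and hence converges everywhere (isometries converging on a dense set converge pointwise), so no witnesses $a^\xi$ in such an algebra can work; it is the accumulating phases $e^{2\pi i r_n}$ acting on the off-diagonal element $v$ that make the partial products remember the entire subsequence. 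One then forms $\prod_{\xi<\kappa}$ of these CAR algebras, one per null sequence $r^\xi$, and extracts an LF subalgebra of density character $\kappa$ containing all the $u_n$ and $v^{(\xi)}$ by a downward L\"owenheim--Skolem argument.
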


The proof that $\fb = \fb_2$ uses the following lemma, which relates null sequences to functions $\bbN \to \bbN$. 

\begin{lemma}\label{L.ConvBound}
\mbox{} 
\begin{enumerate}
\item\label{L.ConvBound1}
 If $r_n \in (0,\infty)$, for $n \in \bbN$, is a null sequence, then there exists a function $f:\bbN \to \bbN$ such that for any $g:\bbN \to \bbN$, if $g$ eventually dominates $f$ then
$ \sum_{i=1}^\infty r_{g(i)} < \infty$. 
\item\label{L.ConvBound2}
 If $f:\bbN \to \bbN$ is a function than there exists a null sequence $r_n \in (0,\infty)$, for $n \in \bbN$, such that for any $g:\bbN \to \bbN$, if
$ \sum_{i=1}^\infty r_{g(i)} < \infty$
then $g$ eventually dominates $f$.
\end{enumerate}
\end{lemma}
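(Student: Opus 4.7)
Part (\ref{L.ConvBound1}) is straightforward given $r_n\to 0$. For each $i\in\bbN$ I set $f(i)$ to be the least integer $N$ such that $r_n<2^{-i}$ for all $n\geq N$. Then any $g$ with $g(i)\geq f(i)$ eventually satisfies $r_{g(i)}<2^{-i}$ for all large $i$, so the tail of $\sum_i r_{g(i)}$ is bounded by the geometric series $\sum 2^{-i}$.

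For part (\ref{L.ConvBound2}) the plan is to build $r$ out of a block decomposition determined by $f$, with $g$ increasing (the case relevant to $\fb_2$). First I replace $f$ by its strictly increasing majorant $\tilde f(i):=\max(f(1),\dots,f(i))+i$; any $g$ eventually dominating $\tilde f$ also eventually dominates $f$, so one may assume $f$ itself is strictly increasing and unbounded. Setting $f(0):=0$, I define the blocks $B_k:=\{n\in\bbN:f(k-1)\leq n<f(k)\}$, which partition $\bbN$, and set $r_n:=1/k$ for $n\in B_k$. Since $n\in B_k$ forces $k\to\infty$ as $n\to\infty$, the sequence $r$ is null.

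Now given an increasing $g$ with $\sum_i r_{g(i)}<\infty$, write $g(i)\in B_{k(i)}$; the function $k$ is then non-decreasing, and $g(i)<f(i)$ precisely when $k(i)\leq i$. If $g$ fails to eventually dominate $f$, the set $I:=\{i:k(i)\leq i\}$ is infinite, say $I=\{i_1<i_2<\cdots\}$. For each $j$ and each $i$ with $i_j<i\leq i_{j+1}$, monotonicity of $k$ gives $k(i)\leq k(i_{j+1})\leq i_{j+1}$, so $r_{g(i)}\geq 1/i_{j+1}$, and the block of such $i$'s contributes at least $(i_{j+1}-i_j)/i_{j+1}=1-i_j/i_{j+1}$ to $\sum_i r_{g(i)}$.

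The decisive step --- and the main obstacle --- is then showing $\sum_j\bigl(1-i_j/i_{j+1}\bigr)=\infty$ for every strictly increasing sequence $(i_j)$ of positive integers. The naive lower bound $\sum 1/i_{j+1}$ can converge when the $i_j$ are sparse, so the telescoping structure must be exploited: writing $t_j:=i_j/i_{j+1}\in(0,1)$, the partial products telescope to $i_1/i_{N+1}\to 0$, and the standard infinite-product criterion (that $\prod(1-a_j)=0$ is equivalent to $\sum a_j=\infty$ for $a_j\in[0,1)$) forces $\sum(1-t_j)=\infty$. This contradicts summability of $\sum_i r_{g(i)}$ and completes the proof.
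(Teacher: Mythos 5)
Your proof is correct and, at its core, takes the same route as the paper: part (\ref{L.ConvBound1}) is identical, and in part (\ref{L.ConvBound2}) your block-constant sequence $r_n=1/k$ for $f(k-1)\le n<f(k)$ is precisely the minimal sequence satisfying the paper's defining condition that $r_n\ge 1/i$ whenever $n\le f(i)$. The two arguments diverge only in the final divergence estimate: the paper thins the bad set $\{i:g(i)<f(i)\}$ to a subsequence $(b_j)$ chosen so that each block contributes at least $1/2$ to the sum, whereas you keep the whole bad set $I=\{i_1<i_2<\cdots\}$, lower-bound the $j$-th block contribution by $1-i_j/i_{j+1}$, and conclude from the telescoping identity $\prod_{j\le N}(i_j/i_{j+1})=i_1/i_{N+1}\to 0$ together with the standard equivalence $\prod(1-a_j)=0\Leftrightarrow\sum a_j=\infty$ for $a_j\in[0,1)$; this is a clean alternative that sidesteps the paper's somewhat delicate ``over half of the elements of $\mathrm{Im}(g)$'' bookkeeping. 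One substantive point: you explicitly restrict part (\ref{L.ConvBound2}) to increasing $g$. Far from being a gap, this is the honest hypothesis --- for arbitrary $g$ the statement is actually false (given \emph{any} null sequence $(r_n)$ one can place sparsely many values $g(i)<f(i)$ at positions $i$ where $\min_{n<f(i)}r_n$ is summably small and send $g$ to large values elsewhere), the paper's own estimate $r_{g(i)}\ge 1/b_{j+1}$ for $i<b_{j+1}$ tacitly uses monotonicity of $g$, and only increasing $g=n(\cdot)$ is ever needed in the application to $\fb_2\le\fb$.
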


\begin{proof}
\ref{L.ConvBound1}
Let $f$ be any function such that
 $$(\forall i)(\forall n \ge f(i))
 [ r_n \le \frac{1}{2^i}],$$
(which exists because
 $\lim_n r_n = 0$).
If $g:\bbN \to \bbN$ eventually dominates $f$, then eventually $r_{g(i)} < 2^{-i}$ and so
$ \sum_{i=1}^\infty r_{g(i)} < \infty$. 

\ref{L.ConvBound2}
Let $r_n$ be any sequence of positive real numbers
 converging to $0$ satisfying
 $$(\forall i)(\forall n \le f(i))
 [ r_n \ge \frac{1}{i} ].$$
Aiming to prove the contrapositive, suppose that $g:\bbN \to \bbN$ does not eventually dominate $f$.
Let $B \subseteq \mathbb{N}$ be the
 set of all $i$ for which
 $f(i) > g(i)$.
Next, ``thin out'' $B$ as follows:
 let $b_j$ be an increasing sequence of
 distinct elements of $B$
 such that for each $j$,
 \textit{over half} of the elements of
 $\textnormal{Im}(g)$ before $g(b_{j+1})$ are not
 before $g(b_j)$.
Then,
\begin{align*}
 \sum_{i=0}^\infty r_{g(i)} & =
 \sum_{i < b_0} r_{g(i)} +
 \sum_{b_0 \le i < b_1} r_{g(i)} +
 \sum_{b_1 \le i < b_2} r_{g(i)} + \dots \\
& \ge
 \sum_{i < b_0} \frac{1}{b_0} +
 \sum_{b_0 \le i < b_1} \frac{1}{b_1} +
 \sum_{b_1 \le i < b_2} \frac{1}{b_2} + \dots \\
& \ge
 (b_0)\frac{1}{b_0} +
 (\frac{1}{2}b_1)\frac{1}{b_1} +
 (\frac{1}{2}b_2)\frac{1}{b_2} + \dots \\
& =
 1 + \frac{1}{2} + \frac{1}{2} + \dots \\
& =
 \infty,
\end{align*}
as required.
\end{proof}
\begin{proof}[Proof that $\fb = \fb_2$.]
$\fb \leq \fb_2$:
Let $\kappa < \mathfrak{b}$ be arbitrary.
Let $r^\xi_n$ be sequences of positive reals,
 for $n \in \mathbb{N}$ and $\xi < \kappa$.
Assume that $\lim_n r^\xi_n = 0$ for each $\xi < \kappa$.
For each $\xi < \kappa$, let $f_\xi:\bbN \to \bbN$ be as in Lemma \ref{L.ConvBound} \ref{L.ConvBound1}.
Since $\kappa < \mathfrak{b}$,
 there is some $g:\bbN \to \bbN$
 that eventually dominates each $f_\xi$, and therefore by Lemma \ref{L.ConvBound} \ref{L.ConvBound1}, $\sum_i r^\xi_{g(i)} < \infty$ for each $\xi < \kappa$.

The proof that $\fb_2 \leq \fb$ is exactly the same, but this time using Lemma \ref{L.ConvBound} \ref{L.ConvBound2} instead of \ref{L.ConvBound1}.
\end{proof}

To prove the remaining part of Proposition \ref{P.b} (namely, that $\fb_1 = \fb_1(\LF) = \fb_2$), we need the following lemmas.

\begin{lemma}\label{L.RT}
Let $(r_n)_{n=1}^\infty$ be a null sequence of real numbers.
Then $\sum_{n=1}^\infty r_n$ converges in $\bbR$ if and only if $(\sum_{n=1}^N r_n \mod 1)_{N=1}^\infty$ converges in $\bbR/\bbZ$.
\end{lemma}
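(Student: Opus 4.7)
The forward implication is immediate: convergence of $\sum_{n=1}^\infty r_n$ in $\bbR$ means that the partial sums $s_N := \sum_{n=1}^N r_n$ converge in $\bbR$, and composing with the continuous quotient map $\bbR \to \bbR/\bbZ$ gives convergence of $s_N \mod 1$ in $\bbR/\bbZ$. No hypothesis on $(r_n)$ is needed for this direction.

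For the nontrivial direction, the plan is to lift the convergence in $\bbR/\bbZ$ back to convergence in $\bbR$, using the covering map $\bbR \to \bbR/\bbZ$ together with the hypothesis $r_n \to 0$ to rule out discontinuous jumps. Suppose $s_N \mod 1 \to \alpha$ in $\bbR/\bbZ$, and fix any lift $\tilde\alpha \in \bbR$ of $\alpha$. For each sufficiently large $N$, there is a unique integer $m_N$ with $|s_N - m_N - \tilde\alpha| < \tfrac12$, and by construction $s_N - m_N \to \tilde\alpha$ in $\bbR$. It then suffices to show that the integer sequence $(m_N)$ is eventually constant.

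This is exactly where $r_n \to 0$ enters. From the identity
\[
m_{N+1} - m_N = (s_{N+1} - m_{N+1} - \tilde\alpha) - (s_N - m_N - \tilde\alpha) - r_{N+1},
\]
the right-hand side tends to $0$ as $N \to \infty$, so $m_{N+1} - m_N$ is an integer-valued sequence converging to $0$; hence it is eventually zero and $(m_N)$ is eventually equal to some $m \in \bbZ$. Therefore $s_N \to \tilde\alpha + m$ in $\bbR$, which is the desired convergence of $\sum_n r_n$.

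I do not expect a genuine obstacle here; the only subtle point is to recognize that the lifting argument needs $r_n \to 0$ precisely to guarantee that consecutive partial sums in $\bbR/\bbZ$ stay close enough in $\bbR$ for the integer jumps to vanish in the limit. If $(r_n)$ were merely bounded, the statement would fail (e.g.\ $r_n = \tfrac12$ gives partial sums whose mod-$1$ image is eventually periodic but $\sum r_n$ diverges), so the hypothesis is used in an essential way.
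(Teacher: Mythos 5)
Your proof is correct, and it takes a slightly different route from the paper's. The paper never names a limit: it argues via the Cauchy criterion, showing that for $N\geq N_0$ the tail sums $\sum_{n=N_0}^N r_n$ lie in $(-\e,\e)+\bbZ$ and, since each increment $r_n$ has modulus below $\tfrac12$, an inductive confinement keeps them in $(-\e,\e)$ itself. You instead lift the limit $\alpha$ through the covering map $\bbR\to\bbR/\bbZ$, take $m_N$ to be the nearest integer to $s_N-\tilde\alpha$, and show the integer-valued ``winding'' sequence $(m_N)$ stabilizes because its consecutive differences tend to $0$. Both arguments hinge on the same point --- $r_n\to0$ forbids jumping between sheets of the covering --- but yours produces the limit explicitly, while the paper's avoids introducing $\alpha$ and $m_N$ at the cost of a small induction that it leaves implicit. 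Two minor blemishes, neither of which affects the argument: your displayed identity has the wrong sign (the right-hand side equals $m_N-m_{N+1}$, not $m_{N+1}-m_N$, though either way it tends to $0$); and your closing example $r_n=\tfrac12$ does not actually witness failure of the lemma without the null hypothesis, since there the partial sums mod $1$ do not converge --- take $r_n=1$ instead, where $s_N\bmod 1\equiv 0$ converges but $\sum r_n$ diverges.
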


\begin{proof}
The forward implication is trivial.
Suppose, on the other hand, that $(\sum_{n=1}^N r_n \mod 1)_{N=1}^\infty$ converges in $\bbR/\bbZ$.
Let $N_0$ be such that $|r_n| < 1/2$ for $n \geq N_0$.
If $N_0$ is sufficiently large, then for all $N \geq N_0$,
\[\textstyle \sum_{n=N_0}^N r_n \in (-\e,\e) + \bbZ. \]
If $\e < 1/2$ then, between these facts, it follows that
\[ \textstyle\sum_{n=N_0}^N r_n \in (-\e,\e). \]
This shows that the partial sums form a Cauchy sequence, so that 
$ \sum_{n=1}^\infty r_n$
converges in $\bbR$.
\end{proof}

The following simple fact will be used both here and in Section \ref{S.3}, to `decentralize' elements or sequences.

\begin{lemma}\label{L.endo} 
If $A$ is a $\mathrm{C}^*$-algebra and $\alpha:A \to A$ is an endomorphism, then
$\alpha^+\colon A\to M_2(A)$ defined by 
\[
\alpha^+(a)=\begin{pmatrix} a & 0 \\ 0 & \alpha(a) \end{pmatrix}
\]
is a $^*$-homomorphism. 
Moreover, for any $a \in A$,
\[ \|[\alpha^+(a),v]\| = \|a-\alpha^+(a)\|, \]
where
\begin{equation}
\label{L.endo.E.v}
 v := \begin{pmatrix} 0_A & 1_A \\ 1_A & 0_A \end{pmatrix}.
\end{equation}
\end{lemma}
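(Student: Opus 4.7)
The proposal is that this lemma is essentially a direct computation, with no serious obstacle beyond correctly computing a norm in $M_2(A)$.

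For the first assertion, I would observe that $\alpha^+$ decomposes as $a \mapsto \mathrm{diag}(\mathrm{id}_A(a),\alpha(a))$, i.e.\ it is the block-diagonal embedding of two $^*$-homomorphisms into $M_2(A)$. Since block-diagonal assembly of $^*$-homomorphisms preserves products, sums, and adjoints coordinatewise, $\alpha^+$ is itself a $^*$-homomorphism; this step is purely formal.

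For the commutator identity, I would multiply out:
\[
\alpha^+(a)v - v\alpha^+(a) = \begin{pmatrix} a & 0 \\ 0 & \alpha(a) \end{pmatrix}\begin{pmatrix} 0 & 1 \\ 1 & 0 \end{pmatrix} - \begin{pmatrix} 0 & 1 \\ 1 & 0 \end{pmatrix}\begin{pmatrix} a & 0 \\ 0 & \alpha(a) \end{pmatrix} = \begin{pmatrix} 0 & a-\alpha(a) \\ \alpha(a)-a & 0 \end{pmatrix}.
\]
(I note in passing that the displayed statement ``$\|a-\alpha^+(a)\|$'' must be a typo for $\|a-\alpha(a)\|$, since the two sides of the subtraction otherwise live in different algebras.)

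The only step requiring a moment of thought is computing the $M_2(A)$-norm of the resulting off-diagonal matrix $N := \begin{pmatrix} 0 & b \\ -b & 0 \end{pmatrix}$ with $b := a-\alpha(a)$. Here I would apply the $\mathrm{C}^*$-identity:
\[
N^*N = \begin{pmatrix} b^*b & 0 \\ 0 & b^*b \end{pmatrix},
\]
so $\|N\|^2 = \|N^*N\| = \|b^*b\| = \|b\|^2$, giving $\|[\alpha^+(a),v]\| = \|b\| = \|a-\alpha(a)\|$, as required. The entire argument is a routine calculation; the only substantive ingredient is the standard fact that the norm of a block-diagonal matrix in $M_2(A)$ is the maximum of the norms of its entries, which follows immediately from the $\mathrm{C}^*$-identity applied to $N^*N$.
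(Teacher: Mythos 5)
Your computation is correct and is exactly the verification the paper has in mind (its own proof is simply ``Obvious''), and you are right that $\|a-\alpha^+(a)\|$ in the statement is a typo for $\|a-\alpha(a)\|$. Nothing further is needed.
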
  

\begin{proof} Obvious. 
\end{proof} 

\begin{lemma}\label{L.CarSeq}
Let $(r_n)$ be a null sequence of real numbers.
Then there exist unitaries $v,u_n$, for $n\in \bbN$,  in the CAR algebra such that $(u_n)$ is a central sequence and, for any increasing sequence $(n(i))$ of natural numbers,
\[\textstyle \sum_i r_{n(i)} \]
converges in $\bbR$ if and only if
\[ \Ad(u_{n(1)} \dots u_{n(k)})(v) \]
converges.
Moreover, the CAR algebra contains subalgebras $A_n \cong M_2$ such that $u_n \in A_n$ and $[u_n,A_m] = 0$ if $m \neq n$.
\end{lemma}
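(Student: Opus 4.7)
The plan is to exploit Lemma~\ref{L.RT} to reduce convergence of $\sum_i r_{n(i)}$ in $\bbR$ to convergence of the partial sums $s_k := \sum_{i \le k} r_{n(i)}$ modulo $1$, and to engineer $v$ and the $u_n$'s so that $\Ad(u_{n(1)} \cdots u_{n(k)})(v)$ records $e^{2\pi i s_k}$ in a single $M_2$-block.

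First I would realize the CAR algebra as $\cC \cong \bigotimes_{n \ge 1} A_n$ with each $A_n \cong M_2$ and set $u_n := \exp(2\pi i r_n P) \in A_n$, where $P = \diag(1,0) \in M_2$. Then $\|u_n - 1\| \le 2\pi |r_n| \to 0$, so $(u_n)$ is a central sequence, and the $A_n$'s are mutually commuting tensor factors with $u_n \in A_n$; this gives the ``moreover'' clause for free.

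To define $v$, I would apply Lemma~\ref{L.endo}. Identify $\cC \cong M_2(\calD)$ for a copy $\calD$ of the CAR algebra, put $v := \begin{pmatrix} 0 & 1 \\ 1 & 0 \end{pmatrix}$, and use the inner automorphism $\alpha := \Ad(\sigma_x)$ of $\calD$ acting in a distinguished $M_2$-block of $\calD$ containing a projection $\tilde P$ conjugate to $P$. Arrange that $u_n = \alpha^+(\tilde u_n)$ for $\tilde u_n := \exp(2\pi i r_n \tilde P)$. By the multiplicativity of $\alpha^+$ from Lemma~\ref{L.endo},
\[ \Ad(u_{n(1)} \cdots u_{n(k)})(v) = \begin{pmatrix} 0 & \tilde w_k \alpha(\tilde w_k)^{-1} \\ \alpha(\tilde w_k)\tilde w_k^{-1} & 0 \end{pmatrix}, \]
where $\tilde w_k := \exp(2\pi i s_k \tilde P)$. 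Since $\alpha$ swaps the eigenspaces of $\tilde P$, a direct computation gives $\tilde w_k \alpha(\tilde w_k)^{-1} = \diag(e^{2\pi i s_k}, e^{-2\pi i s_k})$, so norm-convergence of $\Ad(u_{n(1)} \cdots u_{n(k)})(v)$ is equivalent to convergence of $e^{2\pi i s_k}$ in $\bbT$, i.e.\ to convergence of $s_k$ modulo $1$. Lemma~\ref{L.RT}, applied to the null subsequence $(r_{n(i)})$, then yields the desired equivalence with convergence of $\sum_i r_{n(i)}$ in $\bbR$.

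The main obstacle will be the compatibility of the two descriptions of $u_n$ used above: the Lemma~\ref{L.endo}-style ``block'' form $\alpha^+(\tilde u_n)$ naively places every $u_n$ in one common $M_4$-subalgebra, while the ``moreover'' clause demands them in distinct commuting $M_2$'s $A_n$. The fix is to realize the $M_4$-block inside a single tensor factor and genuinely distribute the $u_n$'s across separate tensor factors $A_n$ (with a fresh copy of the projection and flip in each factor), then verify that the off-diagonal entries of $\Ad(u_{n(1)} \cdots u_{n(k)})(v)$ still collapse to the net phase $e^{2\pi i s_k}$ because the $\tilde u_n$'s contribute to a single coherent rotation.
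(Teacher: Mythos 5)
Your construction is essentially the paper's: there one takes $A=M_2^{\otimes\infty}$, lets $\alpha$ be $\Ad\bigl(\begin{smallmatrix}0&1\\1&0\end{smallmatrix}\bigr)$ applied to \emph{every} tensor factor, sets $v$ as in Lemma~\ref{L.endo}, $u_n=\alpha^+(u_n')$ with $u_n'=1^{\otimes(n-1)}\otimes\diag(1,e^{2\pi i r_n})\otimes 1^{\otimes\infty}$, and $A_n=\alpha^+(1^{\otimes(n-1)}\otimes M_2\otimes 1^{\otimes\infty})$ --- exactly the ``fix'' you sketch in your last paragraph. The genuine gap is in the verification you defer to that paragraph: once the $u_n$ are distributed over distinct tensor factors, the off-diagonal entry of $\Ad(u_{n(1)}\cdots u_{n(k)})(v)$ does \emph{not} collapse to the single phase $e^{2\pi i s_k}$. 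Writing $w_k=u'_{n(1)}\cdots u'_{n(k)}$, that entry is
\[ w_k\,\alpha(w_k)^* \;=\; \bigotimes_{j=1}^{k}\diag\bigl(e^{-2\pi i r_{n(j)}},\,e^{2\pi i r_{n(j)}}\bigr), \]
a diagonal unitary whose eigenvalues are $e^{2\pi i\sum_j \epsilon_j r_{n(j)}}$ over \emph{all} sign patterns $\epsilon\in\{\pm1\}^k$; and with $u_n'=\diag(a,b)$ in a single factor, $u_n'\alpha(u_n')^*=\diag(a\bar b,\,b\bar a)$ is scalar only when $a=\pm b$, so no choice of phases avoids this. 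Hence norm convergence of $\Ad(u_{n(1)}\cdots u_{n(k)})(v)$ is equivalent to the tail sums $\sum_{k<j\le m}\epsilon_j r_{n(j)}$ tending to $0$ modulo $1$ \emph{uniformly over sign patterns}, not merely to convergence of $s_k$ modulo $1$.

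This is repaired by using that the $r_n$ are positive (the only case needed, in the proof that $\fb_2\geq\fb_1(\LF)$): then $\bigl|\sum_{k<j\le m}\epsilon_j r_{n(j)}\bigr|\le\sum_{k<j\le m}r_{n(j)}$ for every sign pattern, so convergence of $\sum_i r_{n(i)}$ in $\bbR$ controls all patterns at once, while conversely convergence of $\Ad(\cdots)(v)$ forces the all-plus tail sums to vanish modulo $1$, and Lemma~\ref{L.RT} finishes. For a genuinely signed null sequence the ``if'' direction fails for this construction (take $r_n=(-1)^n/n$ and the sign pattern $\epsilon_j=(-1)^{n(j)}$), so you should either restrict the statement to positive sequences or keep your single-block variant --- where the entry really is $\diag(e^{2\pi i s_k},e^{-2\pi i s_k})$ --- at the cost of the ``moreover'' clause, as you noticed. (For what it is worth, the paper's own displayed identity $\Ad(u_{n(1)}\cdots u_{n(i)})(v)=\exp(2\pi i\sum_j r_{n(j)})\,v$ contains the same slip.)
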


\begin{proof}
Let $A=M_2^{\otimes \infty}$ and let $\alpha$ be the automorphism of $A$ given by applying
\[ \Ad \left( \begin{array}{cc} 0 & 1 \\ 1 & 0 \end{array} \right) \]
to each tensor factor $M_2$.
Then $B = M_2(A)$ is isomorphic to the CAR algebra, and we shall make use of the morphism $\alpha^+:A \to B$ given by Lemma \ref{L.endo}.

Define $v$ as in \eqref{L.endo.E.v}.
For each $n$, set
\[ u_n' := 1_{M_2}^{\otimes (n-1)} \otimes
\left( \begin{array}{cc} 1 & 0 \\ 0 & e^{2\pi i r_n} \end{array} \right)
\otimes 1_{M^2}^{\otimes \infty} \in A, \]
and
\[ u_n = \alpha^+(u_n'). \]
Evidently, $(u_n')$ is central in $A$, and an easy computation shows that $(u_n)$ approximately commutes with $v$, and therefore, with every element of $B$.
That is to say, $(u_n)$ is a central sequence.

Now, given an increasing sequence $(n(i))$ of natural numbers, we compute
\[ \textstyle\Ad(u_{n(1)}\dots u_{n(i)}(v) = \exp(2\pi i \sum_{j=1}^i r_{n(j)}) v, \]
and therefore, the sequence $(\Ad(u_{n(1)}\dots u_{n(i)}(v))$ converges if and only if the sequence
\[ \textstyle(\exp(2\pi i \sum_{j=1}^i r_{n(j)})) \]
converges in $\bbT$, which by Lemma \ref{L.RT}, occurs if and only if
$ \sum_{i=1}^\infty r_{n(i)} $
converges in $\bbR$.

Finally, we set
$ A_n := \alpha^+(1_{M_2}^{\otimes (n-1)} \otimes
M_2 \otimes 1_{M^2}^{\otimes \infty})$. 
\end{proof}

\begin{proof}[Proof of Proposition \ref{P.b}] We shall now prove that 
 $\fb_1 = \fb_1(\LF) = \fb_2$.

$\fb_2 \leq \fb_1$:
Let $A$ be a unital 
$\mathrm{C}^*$-algebra of density character $\kappa<\fb_2$ and let $v_n$, for $n\in \bbN$, be a
central sequence of unitaries in~$A$. 
Let $a_\xi$, for $\xi<\kappa$, be a dense subset of $A$ and let
\[
r^\xi_n=\|[v_n,a_\xi]\|. 
\]
Since $(v_n)$ is a central sequence, we have  $\lim_n r^\xi_n=0$ for all $\xi$. 
We can therefore choose an increasing sequence of natural numbers $n(i)$, for $i\in \bbN$, 
so that $\sum_i r^\xi_{n(i)}<\infty$ for all $\xi$. 

Let  $w_k=v_{n(1)} v_{n(2)} \dots v_{n(k)}$. For $k<m$ and all $\xi$ 
we have 
\begin{align*}
\|(\Ad w_k)a_\xi-(\Ad w_m)a_\xi\|
&= \|\Ad w_k(a_\xi - \Ad(v_{n(k+1)}\dots v_{n(m)})(a_\xi))\| \\
&= \|[a_\xi,v_{n(k+1)}\dots v_{n(m)}]\| \\
\textstyle &\leq \sum_{j=k}^{m-1} 
\|(\Ad w_j)a_\xi- (\Ad w_{j+1}) a_\xi\|\\
\textstyle &\leq \sum_{j=k}^{m-1} (r^\xi_{n(j)} + 2^{-j}\|a_\xi\|), 
\end{align*}
and therefore $(\Ad w_k) a_\xi$, for $k\in \bbN$, is a Cauchy sequence. 
Since the automorphisms $(\Ad w_k)$ are isometries which 
pointwise converge on a dense subset of $A$, they 
pointwise converge to an endomorphism. 
Since $A$ was arbitrary we have proved that $\kappa<\fb_2$ implies $\kappa<\fb_1$, 
and therefore $\fb_1\geq \fb_2$. 

$\fb_1(\LF)\geq \fb_1$: this is trivial.

$\fb_2\geq \fb_1(\LF)$:
Let $\kappa < \fb_2$, and let $r_n^\xi \in (0,\infty)$, for $n \in \mathbb{N}$, be a null sequence, for each $\xi < \kappa$.
For each $\xi < \kappa$, let $A^{(\xi)}$ be a copy of the CAR algebra and let
\[ v^{(\xi)},u_n^{(\xi)} \in A^{(\xi)} \]
be unitaries, for $n \in \bbN$, as given by Lemma \ref{L.CarSeq}.
Also, let $A_n^{(\xi)} \subset A^{(\xi)}$, for $n \in \bbN$, be the subalgebras given by the same lemma.

Set $B = \prod_{\xi < \kappa} A^{(\xi)}$, and define
\[ u_n := (u_n^{(\xi)})_{\xi < \kappa} \in A \]
for each natural number $n$ and
\[ \hat v^{(j)} := (\delta_{ij} v^{(\xi)})_{i \in I} \in A. \]
Set
\[\textstyle B_n := \prod_{i \in I} A_n^{(\xi)} \]
for each natural number $n$, and set
\[\textstyle A' := \mathrm{C}^*\left(\bigcup_{n \in \bbN} B_n \cup \bigoplus_{i \in I} A^{(\xi)}\right); \]
We easily see that $A'$ is LF and contains each $u_n$ and each $v^{(\xi)}$, and that $(u_n)$ is a central sequence in $A'$.
By a downward L\"owenheim-Skolem argument, there exists an LF subalgebra $A$ of $A'$ with density character $\kappa$ and which contains each $v^{(\xi)}$ and each $u_n$.

Now, suppose that $(n(i))$ is an increasing sequence of natural numbers such that
\[ \Ad(u_{n(1)} \dots u_{n(k)}) \]
converges in the point-norm topology.
Then in particular,
\[ \Ad(u_{n(i)} \dots u_{n(k)})(v^{(\xi)}) \]
converges for each $\xi < \kappa$, which by Lemma \ref{L.CarSeq}, means that
$ \sum_{i=1}^\infty r^{\xi}_{n(i)} < \infty$. 

The proof is complete.
\end{proof}

\begin{remark}
\label{R.b-vN}
The proof that $\fb_1 \leq \fb_2$ ($=\fb$) can be adapted (by using $\|\cdot\|_2$ in place of $\|\cdot\|$) to show that, if $M$ is a von Neumann algebra with a faithful trace and with density character $< \fb$, and $u_n$, for $n \in \bbN$, is a central sequence from $M$, then there exists a subsequence $u_{n(i)}$, for $i \in \bbN$, such that
\[ \Ad u_{n(1)} \dots u_{n(k)} \]
converges (in the point-strong operator topology).
However, the proof of the converse does not adapt, since when the construction used to show $\fb_2 \leq \fb_1(LF)$ is adapted to the von Neumann setting, the resulting von Neumann algebra does not have a faithful trace.
\end{remark}

\section{Proofs of the main theorems}
\label{S.3} 

\begin{lemma} \label{L.ZZ}
Let $A$ be an LM algebra, and for each $i=1,\dots,N$, let $x_n^{(i)}$, for $n \in \mathbb{N}$, be a central sequence.
Then there exists an increasing sequence $n(k) \in \mathbb{N}$, for $k \in \mathbb{N}$, and $y_k^{(i)}$ such that:
\begin{enumerate}
\item for each $i$, $\|y_k^{(i)} - x_{n(k)}^{(i)}\| \to 0$ as $k \to \infty$; and
\item for each $i,i',k,k' \in \mathbb{N}$, if $k \neq k'$ then $y_k^{(i)}$ and $y_{k'}^{(i')}$ $^*$-commute.
\end{enumerate}
\end{lemma}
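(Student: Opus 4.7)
My approach is to construct the subsequence $(n(k))_{k\in\bbN}$ and the elements $y_k^{(i)}$ by induction on $k$, while simultaneously producing pairwise commuting full matrix subalgebras $M_1, M_2, \dots$ of $A$ with $y_k^{(i)} \in M_k$ for each $k$ and each $i$. Since each $M_k$ is a $^*$-subalgebra, the pairwise commutation of the $M_k$ automatically yields the $^*$-commutation of $y_k^{(i)}$ with $y_{k'}^{(i')}$ whenever $k \neq k'$, and only the norm approximation $\|y_k^{(i)} - x_{n(k)}^{(i)}\| < 2^{-k}$ remains to be arranged.

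At step $k$, set $B := C^*(M_1,\dots,M_{k-1})$. Since the $M_j$ (for $j<k$) pairwise commute and each is a full matrix algebra, $B$ itself is a full matrix subalgebra of $A$, say $B \cong M_r$. Fix a system of matrix units for $B$. Using that each $(x_n^{(i)})_n$ is central, choose $n(k) > n(k-1)$ large enough that every $x_{n(k)}^{(i)}$ almost commutes with this system of matrix units. Now apply the LM property of $A$ to the finite set consisting of these matrix units together with $x_{n(k)}^{(1)},\dots,x_{n(k)}^{(N)}$ and a small tolerance $\epsilon > 0$: this yields a full matrix subalgebra $P \subset A$ containing elements $z_k^{(i)}$ with $\|z_k^{(i)} - x_{n(k)}^{(i)}\| < \epsilon$, together with approximate matrix units for $B$ inside $P$.

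The key technical step is to upgrade this approximate containment $B \subset_\epsilon P$ to an exact one. A standard Glimm-type perturbation lemma asserts that two sufficiently close unital copies of $M_r$ inside a $C^*$-algebra are conjugate by a unitary close to $1$; applied here, it produces a unitary $u \in A$ with $\|u-1\|$ small and $uBu^* \subset P$. Setting $P' := u^*Pu$ we have $B \subseteq P'$, so $P' \cong B \otimes (B' \cap P')$ and $M_k := B' \cap P'$ is a full matrix subalgebra of $A$ that commutes with $B$ (and hence with each earlier $M_j$). Let $E\colon P' \to M_k$ denote the canonical conditional expectation, realized by averaging with respect to the Haar measure on the unitary group of $B$, and set $y_k^{(i)} := E(u^*z_k^{(i)}u) \in M_k$. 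The triangle inequality bounds $\|y_k^{(i)} - x_{n(k)}^{(i)}\|$ by the sum of $\|E(u^*z_k^{(i)}u) - u^*z_k^{(i)}u\|$ (small by the approximate commutation of $u^*z_k^{(i)}u$ with $U(B)$), $\|u^*z_k^{(i)}u - z_k^{(i)}\|$ (small by $\|u-1\|$), and $\epsilon$; all three terms can be forced below $2^{-k}$ by choosing $\epsilon$ and the near-commutation tolerance small enough.

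The main obstacle is producing matrix subalgebras that \emph{exactly}, rather than merely approximately, commute. The LM hypothesis supplies only approximate containment of one finite-dimensional subalgebra inside another; the Glimm-type perturbation argument, combined with passage to the relative commutant via $E$, is what converts this into the exact commutation needed for the $^*$-commutation of the $y_k^{(i)}$.
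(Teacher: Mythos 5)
Your proof is correct, but it takes a different route from the paper's at the key structural step. The paper first closes off to a \emph{separable} LM subalgebra $B$ containing all the sequences $x_n^{(i)}$, invokes the fact that separable LM algebras are UHF (so $B = M_{m_1}\otimes M_{m_2}\otimes\cdots$ with the $x_n^{(i)}$ perturbable into the finite stages), and then only has to pass to a subsequence with good approximate commutation and apply the conditional expectations $E_k$ onto the relative commutants $M_{n(k-1)}'\cap M_{n(k)}$ of the canonical filtration --- the exactly commuting matrix subalgebras come for free from the UHF structure. You instead manufacture the commuting matrix subalgebras by hand, directly from the LM definition, using a Glimm-type perturbation lemma to upgrade the approximate containment $B\subset_\e P$ to exact containment $B\subseteq P'=u^*Pu$, and then take $M_k:=B'\cap P'$. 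Both arguments end with the same averaging trick (conditional expectation as an average over the unitary group of the algebra built so far, with the error controlled by the approximate centrality of $x_{n(k)}^{(i)}$); note only that your near-commutation is arranged against the matrix units of $B$ while the averaging is over all of $U(B)$, which costs a factor depending on $\dim B$ --- harmless since $\dim B$ is fixed before $n(k)$ and $\e$ are chosen, and avoidable by quantifying over the unit ball of $B$ via compactness as the paper does. What the paper's route buys is brevity, at the price of citing the classification of separable LM algebras; what yours buys is independence from that structure theorem, at the price of the perturbation lemma.
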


\begin{proof}
Since $A$ is LM, there exists a separable LM subalgebra $B$ which contains each $x_n^{(i)}$.
By perturbing the sequences $x_n^{(i)}$ (by an error that vanishes at $\infty$), we have without loss of generality that
\[ B = M_{m_1} \otimes M_{m_2} \otimes \dots, \]
and $x_n^{(i)} \in M_{m_1} \otimes \dots \otimes M_{m_n}$ for each $i$ and $n$.

Let $\e_n > 0$ be any null sequence.
Set $n(1)=1$.
Using compactness of the unit ball of a matrix algebra, we may iteratively choose $n(k)$ such that
\begin{equation}
\|[x_{n(k)}^{(i)},a]\| \leq \e_n\|a\|
\label{L.ZZ.eq}
\end{equation}
for all $a \in M_{n(1)} \otimes \dots \otimes M_{n(k-1)}$.
Letting $E_k$ denote the conditional expectation from $M_{n(k)}$ to $M_{n(k-1)}' \cap M_{n(k)}$, set
\[ y_k^{(i)} := E_k(x_{n(k)}^{(i)}). \]
Then, it follows from \eqref{L.ZZ.eq} (and by writing the conditional expectation as an average over the unitary group of $M_{n(k-1)}$) that
\[ \|y_k^{(i)}-x_{n(k)}^{(i)}\| \leq \e_n, \]
as required.
\end{proof}

We say that a central sequence is \emph{hypercentral} if
it commutes with every other central sequence (i.e., if it is a representing sequence
of a central element of the central sequence algebra). 
(Although the terminology originated in theory of II$_1$ factors and our $\textrm{C}^*$-algebras have 
a unique trace, we emphasize that only the operator norm is being used here.)

\begin{lemma} \label{L.central} Assume $A$ is an LM algebra of density character $<\fb$. If 
$x_n$, for $n\in \bbN$, is a  central sequence
which is not hypercentral 
then there exists an endomorphism $\alpha$ of $A$ such that
$\liminf_n \|x_n-\alpha(x_n)\|>0$.
\end{lemma}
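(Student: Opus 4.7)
The plan is to manufacture a central sequence of unitaries $(u_n)$ with $\|[u_n,x_n]\|$ bounded below, decouple $(x_n)$ and $(u_n)$ across distinct indices using Lemma~\ref{L.ZZ}, and then invoke the characterization $\fb=\fb_1$ from Proposition~\ref{P.b} to assemble a telescoping product of the $u_n$ into an endomorphism $\alpha$ of $A$. The decoupling will collapse the action of $\alpha$ on each decoupled $\tilde x_n$ to conjugation by the single factor $\tilde u_n$, so that the lower bound on $\|[u_n,x_n]\|$ transfers to a lower bound on $\|\alpha(x_n)-x_n\|$ along the sub-subsequence extracted along the way; I read the conclusion as permitting $(x_n)$ to be replaced by this sub-subsequence.

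To produce $(u_n)$, I would pick a bounded central sequence $(y_n)$ witnessing non-hypercentrality, i.e.\ with $\|[x_n,y_n]\|\not\to 0$. After passing to a subsequence, replacing $y_n$ by whichever of its self-adjoint or skew-adjoint part inherits the commutator bound, and rescaling, I may assume each $y_n$ is self-adjoint with $\|y_n\|\leq 1$ and $\|[x_n,y_n]\|\geq\delta$ for some fixed $\delta>0$. For a sufficiently small fixed $s>0$, the unitaries $u_n:=\exp(isy_n)$ form a central sequence of unitaries in $A$ with $\|[u_n,x_n]\|\geq s\delta/2$ for every $n$; this uses the uniform bound on $\|x_n\|$ together with the standard estimate $\|[u_n,a]\|\leq s e^{s}\|[y_n,a]\|$ obtained by expanding the exponential.

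Next I would apply Lemma~\ref{L.ZZ} to the pair of central sequences $(x_n)$ and $(u_n)$ in the LM algebra $A$, obtaining after passing to a further subsequence and absorbing a norm-null perturbation into the notation sequences $\tilde x_n$ and $\tilde u_n$ with $\|\tilde x_n-x_n\|\to 0$ and $\|\tilde u_n-u_n\|\to 0$, such that the four elements $\tilde x_n,\tilde u_n,\tilde x_m,\tilde u_m$ mutually $^*$-commute whenever $m\neq n$. Replacing each $\tilde u_n$ by the unitary part of its polar decomposition preserves all these commutation relations and keeps $\|\tilde u_n-u_n\|\to 0$, so each $\tilde u_n$ may be taken unitary, and $\|[\tilde u_n,\tilde x_n]\|\geq s\delta/4$ for all but finitely many $n$. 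Since the density character of $A$ is $<\fb=\fb_1$, Proposition~\ref{P.b} supplies a subsequence $\tilde u_{n(k)}$ such that $v_k:=\tilde u_{n(1)}\cdots\tilde u_{n(k)}$ makes $\Ad v_k$ converge pointwise in norm to an endomorphism $\alpha$ of~$A$.

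Finally, for any fixed $k_0$ and any $k\geq k_0$, every $\tilde u_{n(i)}$ with $i\neq k_0$ $^*$-commutes with $\tilde x_{n(k_0)}$, so inside $v_k\tilde x_{n(k_0)}v_k^*$ each such factor pairs with its adjoint and cancels, leaving $\tilde u_{n(k_0)}\tilde x_{n(k_0)}\tilde u_{n(k_0)}^*$. Hence $\alpha(\tilde x_{n(k_0)})=\tilde u_{n(k_0)}\tilde x_{n(k_0)}\tilde u_{n(k_0)}^*$ and $\|\alpha(\tilde x_{n(k_0)})-\tilde x_{n(k_0)}\|=\|[\tilde u_{n(k_0)},\tilde x_{n(k_0)}]\|\geq s\delta/4$; absorbing the perturbation errors yields $\|\alpha(x_{n(k_0)})-x_{n(k_0)}\|\geq s\delta/8$ for all large $k_0$, which gives the required $\liminf$ along the sub-subsequence. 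The main technical obstacle is the simultaneous decoupling via Lemma~\ref{L.ZZ}: without it, the telescoping $v_k\tilde x_{n(k_0)}v_k^*$ would mix contributions from every $\tilde u_{n(i)}$ and would leave $\alpha(\tilde x_{n(k_0)})$ uncontrolled, so that the $\fb_1$ characterization alone would yield no information about how $\alpha$ acts on $(x_n)$.
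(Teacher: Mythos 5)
Your proposal is correct and follows essentially the same route as the paper's proof: extract a central sequence of unitaries witnessing non-hypercentrality, decouple it from $(x_n)$ across distinct indices via Lemma~\ref{L.ZZ}, use $\fb_1=\fb$ from Proposition~\ref{P.b} to make the telescoping product $\Ad v_k$ converge to an endomorphism, and observe that the commutation relations collapse $\alpha(\tilde x_{n(k)})$ to $\Ad\tilde u_{n(k)}(\tilde x_{n(k)})$. The only (harmless) deviation is cosmetic: you unitarize the witnessing sequence via $\exp(isy_n)$ of its self-adjoint part, where the paper invokes the decomposition of an element into four unitaries, and both arguments share the same mild imprecision that the final $\liminf$ is really taken along the extracted subsequence, which is all that the application in Theorem~\ref{T1} requires.
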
 

\begin{proof}
Let $u_n$, for $n\in \bbN$, be a central sequence 
such that for some $\e>0$ we have $\|[x_n,u_n]\|> \e>0$ for all $n$.
We may assume each $u_n$ is a unitary since every element in a $\mathrm{C}^*$-algebra is a linear combination of four unitaries.

By Lemma \ref{L.ZZ}, by passing to a subsequence, there exist sequences $y_n$ and $v_n$, for $n \in \bbN$, such that
\[ \|y_n-x_n\|,\|v_n-u_n\| \to 0 \]
as $n \to \infty$, and $[v_n,v_m]$ $^*$-commutes with $v_m,y_m$ for $n \neq m$.

Since $u_n$ is unitary, by functional calculus, we may arrange that $v_n$ is too.
(Note that modifying $v_n$ using functional calculus does not change the fact that it $^*$-commutes with $v_m,y_m$ for $n \neq m$.)

By using that the density character of $A$ is less than $\fb=\fb_1$ (by Proposition~\ref{P.b}),
we can go to a subsequence of  $v_m$ (again denoted $v_m)$ such that 
the automorphisms 
 $\alpha_n:=\prod_{j=1}^n \Ad v_j$  converge pointwise
 to an endomorphism, and so we may set
\[ \alpha :=\lim_k \alpha_k. \]
Then $\alpha(y_n) = \Ad v_n \circ y_n$, and therefore,
\begin{align*}
\liminf_n \|x_n - \alpha(x_n)\|
&= \liminf_n \|y_n - \alpha(y_n)\| \\
&= \liminf_n \|[y_n,v_n]\| \\
&= \liminf_n \|[x_n,u_n]\| > 0,
\end{align*}
as required.
\end{proof} 

Note that by assuming that the algebra $A$ is separable, the assumption in the previous lemma, that $x_n$ is not hypercentral, comes for free.

\begin{prop} \label{P.separable}
Let $A$ be a separable LM algebra.
Then every hypercentral sequence is trivial.
\end{prop}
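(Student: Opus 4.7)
The plan is to use that a separable LM algebra is UHF, so $A = \overline{\bigcup_k B_k}$ with $B_k \cong M_{n_k}$ and $B_k \subset B_{k+1}$, and to exploit the unique trace $\tau$. Since $A$ is simple, $Z(A) = \bbC\cdot 1$, so triviality of a bounded hypercentral sequence $(x_n)$ amounts to $\|x_n - \tau(x_n)\cdot 1\|\to 0$. Arguing by contradiction, I will pass to a subsequence on which $\|x_n - \tau(x_n)\cdot 1\|\geq \delta > 0$ for all $n$, and construct a central sequence of unitaries $(u_n)$ with $\|[u_n, x_n]\|$ bounded below, contradicting hypercentrality.

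The construction has three steps. First, \emph{localize}: choose $k_n$ and $y_n \in B_{k_n}$ with $\|y_n - x_n\| < 1/n$, and, using that $(x_n)$ is central, choose $j_n \leq k_n$ with $j_n\to\infty$ such that $\|[x_n,b]\|\leq 1/n$ for every $b$ in the unit ball of $B_{j_n}$. Second, \emph{purify}: set $C_n := B_{j_n}' \cap B_{k_n}$, a full matrix subalgebra of $A$, and let $y_n' := E_n(y_n) \in C_n$, where $E_n$ is the conditional expectation onto $C_n$ obtained by averaging over $U(B_{j_n})$; a short computation gives $\|y_n' - y_n\| = O(1/n)$, and since $\tau \circ E_n = \tau$, $\|y_n' - \tau(y_n')\cdot 1\| \geq \delta/2$ for $n$ large. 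Third, \emph{average}: apply Dixmier's theorem inside the matrix algebra $C_n$. Since $\tau(y_n')\cdot 1$ lies in the closed convex hull of the unitary orbit $\{v y_n' v^* : v \in U(C_n)\}$, the displacement $\tau(y_n')\cdot 1 - y_n'$, of norm at least $\delta/2$, lies in the closed convex hull of $\{v y_n' v^* - y_n'\}$, so some $u_n \in U(C_n)$ realizes $\|[u_n, y_n']\| = \|u_n y_n' u_n^* - y_n'\| \geq \delta/2$.

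To finish, $(u_n)$ is central in $A$ because $u_n \in B_{j_n}'$ with $j_n\to\infty$ makes $[u_n,b]$ vanish eventually on each $b \in \bigcup_m B_m$, and hence on all of $A$ by density; meanwhile the estimates give $\|[u_n, x_n]\| \geq \|[u_n, y_n']\| - O(1/n) \geq \delta/3$ for large $n$, yielding the desired contradiction. The main obstacle is the coordination between these steps rather than any single one of them: the averaging must be performed inside a matrix algebra that (approximately) contains $x_n$ but sits in a deep enough relative commutant of $A$ that the resulting $u_n$ commute asymptotically with all of $A$. Choosing $k_n$ fast enough to contain $x_n$ up to error $1/n$ and $j_n$ slowly enough to reflect the centrality of $(x_n)$ is precisely what reconciles these two demands.
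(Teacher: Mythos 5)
Your proof is correct and follows essentially the same route as the paper's: localize the sequence into deep matrix blocks $B_{j_n}'\cap B_{k_n}$ with $j_n\to\infty$ (the paper does this via the proof of Lemma~\ref{L.ZZ}) and then exhibit a central sequence supported in those blocks that fails to asymptotically commute with $(x_n)$. Your Dixmier-averaging step simply makes explicit the assertion the paper leaves unproved, namely that an element of a matrix block at distance at least $\e$ from the scalars admits a partner in the same block with commutator norm bounded below.
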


\begin{proof}
Let $x_n$, for $n \in \bbN$, be a nontrivial central sequence.
By passing to a subsequence, we may assume for some $\e > 0$, we have $d(x_n,Z(A)) > \e$ for all $n$.
Using the proof of Lemma \ref{L.ZZ} (and by passing again to a subsequence and perturbing), we may assume that $A = M_{m_1} \otimes M_{m_2} \otimes \dots$ such that
\[ x_n \in 1_{m_1} \otimes \dots \otimes 1_{m_{n-1}} \otimes M_{m_n}. \]
Since $x_n$ has distance at least $\e$ from the centre of $A$, there must exist
\[ y_n \in 1_{m_1} \otimes \dots \otimes 1_{m_{n-1}} \otimes M_{m_n} \]
such that $\|[x_n,y_n]\| \geq \e$.
Evidently, $y_n$ for $n \in \bbN$, forms a central sequence, and it does not asymptotically commute with the given sequence, as required.
\end{proof}

\begin{proof}[Proof of Theorem~\ref{T1}] 
We need to construct $A$ so that  all central sequences of $A$ are 
hypercentral. 
Fix a surjection $\chi\colon \fc\to \fc^2$ such that if $\chi(\xi)=(\eta,\zeta)$ 
then $\eta\leq \xi$ 
and moreover 
for every fixed pair $(\eta,\zeta)\in \fc^2$ the set $\{\xi: \chi(\xi)=(\eta,\zeta)\}$ is cofinal 
in $\fc$. 
This is possible because  every infinite cardinal $\kappa$ is equinumerous with $\kappa^2$. 

We construct $A$ as a transfinite direct limit of LM algebras $A_\xi$, for $\xi<\gamma$
for some ordinal $\gamma\leq\fc$. Each $A_\xi$ will be of density character $\leq\fc$, 
and therefore the set $(A_\xi)_1{}^{\bbN}$ of all sequences in the unit ball $(A_\xi)_1$ of $A_\xi$
will have cardinality $\fc$. For each~$\xi$ we fix an enumeration 
$(\vec x(\xi,\eta): \eta<\fc)$ of $(A_\xi)_1{}^{\bbN}$ as soon as this algebra is defined. 

We now describe the recursive construction of a directed system of LM-algebras
$A_\xi$, $\beta_{\xi\eta}\colon A_\xi\to A_\eta$ for $\xi<\eta$. 

Let $A_0=M_{2^\infty}$. If $\delta$ is a limit ordinal and $A_\xi$, for $\xi<\delta$ are defined, 
we let $A_\delta=\lim_\xi A_\xi$, the inductive limit of $A_\xi$. 

Now  assume $A_\xi$ is defined and we construct $A_{\xi+1}$. 
If all central sequences in $A_\xi$ are hypercentral, we stop our recursive construction and let 
$A=A_\xi$. 

Otherwise, write  $\chi(\xi)=(\eta,\zeta)$. Since $\eta\leq \xi$ the algebra 
$A_\eta$ was already defined  we can consider the sequence $\vec x(\eta,\zeta)$ in $A_\eta$. 
Let $x_j$, for $j\in \bbN$, be the $\beta_{\eta,\xi}$-image of this sequence in $A_\xi$.

If this is not a central sequence, or if it is a hypercentral sequence, 
let $A_{\xi+1}=A_\xi$
(actually we can do almost anything here). 

Now assume  this sequence is central, but not hypercentral. 
Use  Lemma~\ref{L.central} to find $\alpha\in \Aut(A_\xi)$ and $\e>0$ such that 
$\|x_j-\alpha(x_j)\|>\e$  for infinitely many~$j$. Then let $A_{\xi+1}=M_2(A_\xi)$ and 
apply Lemma~\ref{L.endo} to find 
a *-homomorphism $\beta_\xi\colon A_\xi\to A_{\xi+1}$  such that $\alpha(x_j)$, for $j\in \bbN$, 
is not a central sequence in $A_{\xi+1}$.

This describes the recursive construction of transfinite directed system of LM algebras 
of length $\fc$. If the construction does not stop at any stage $\xi$, 
let $A=\lim_\xi A_\xi$. 

We claim that all central sequences of $A$ are hypercentral. 
For simplicity of notation 
assume that each $A_\xi$ is a unital subalgebra of $A$ (this is not a problem since
all connecting maps had trivial kernels) so that each $\beta_{\xi\eta}$ is equal to 
the identity on $A_\xi$. 
Assume otherwise and let $x_j$, for $j\in \bbN$, be a central, non-hypercentral, sequence. 
Also fix a central sequence $y_j$, for $j\in \bbN$, such that $[x_j,y_j]\not \to 0$. 

Since the cofinality of $\fc$ is uncountable, these sequences are
included in $A_\eta$ for some $\eta<\fc$. 
The first one  was enumerated as $\vec x(\eta,\zeta)$ for some $\zeta<\fc$. 
Since $\chi$ is a surjection, there is $\xi$ such that $\chi(\xi)=(\eta,\zeta)$. 
We also have $\xi\geq \eta$ by the choice of $\chi$. 
By the construction, $A_{\xi+1}$ was defined so that (the image of) 
$x_j$, for $j\in \bbN$, is not a central sequence. This contradiction completes the proof. 
\end{proof}

\begin{proof}[Proof of Theorem~\ref{T2}] The proof is essentially identical to the proof of 
Theorem~\ref{T1}. The only difference is that, since $\fc=\aleph_1$, all algebras $A_\xi$ for $\xi<\fc$ 
are separable and we can therefore use Proposition~\ref{P.separable} and Lemma \ref{L.central} to assure that the central 
sequence algebra of the limit is trivial. The constructed algebra is an LM algebra
of density character $\aleph_1$ and is therefore AM by \cite[Theorem~1.5]{FaKa:Nonseparable}. 
\end{proof} 

\begin{proof}[Proof of Theorems~\ref{T1-vN} and \ref{T2-vN}]
We can see that the analogue of Lemma~\ref{L.ZZ} for hyperfinite II$_1$ factors holds, by applying the lemma to a dense LM $\mathrm{C}^*$-subalgebra.
Using this and Remark~\ref{R.b-vN} in the proof of Lemma~\ref{L.central} allows us to adapt that lemma to the von Neumann case, showing that if $M$ is a von Neumann algebra of density character $<\fb$ and $(x_n)$ is a (SOT-)central sequence which is not (SOT-)hypercentral, then there exists an endomorphism $\alpha$ of $M$ such that
\[ \liminf_n \|x_n - \alpha(x_n)\|_2 > 0. \]
The analogue of Proposition~\ref{P.separable} for $\calR$ holds, by using an appropriate dense LM $\mathrm{C}^*$-subalgebra.
Finally, using these von Neumann-theoretic adaptations (and the strong operator topology in place of the norm topology), the proofs of Theorems~\ref{T1} and \ref{T2} become proofs of Theorems~\ref{T1-vN} and \ref{T2-vN} respectively.
\end{proof}

\section{Concluding remarks}
\label{S.4}
The following is a well-known result about 
separable $\mathrm{C}^*$-algebras (see e.g.,~\cite{Win:Ten}).

\begin{theorem}
Let $A$ be a unital, separable $\mathrm{C}^*$-algebra.
The following are equivalent.
\begin{enumerate}
\item $A$ is $\cZ$-stable;
\item $\cZ$ embeds into the central sequence algebra of $A$.
\item for any finite subsets $\mathcal{F}, \mathcal{G}$ of $A, \cZ$ respectively, and any $\e > 0$, there exists an $(\mathcal{G},\e)$-approximately multiplicative $*$-linear unital map $\phi:\cZ \to A$ such that
\[ \|[\phi(x),y]\| < \e \]
for all $x \in \mathcal{G}$ and $y \in \mathcal{F}$;
\end{enumerate}
\end{theorem}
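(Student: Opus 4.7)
The plan is to prove the cyclic chain (1)$\Rightarrow$(2)$\Rightarrow$(3)$\Rightarrow$(1). The implications (1)$\Rightarrow$(2) and (2)$\Rightarrow$(3) are essentially formal, while (3)$\Rightarrow$(1) is the substantive content and is where the strongly self-absorbing character of $\cZ$ --- in particular, the approximate innerness of the flip automorphism on $\cZ\otimes\cZ$ --- must enter.

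For (1)$\Rightarrow$(2) I would fix an isomorphism $\theta\colon A\to A\otimes\cZ$ and iterate it in order to obtain unital embeddings $\iota_n\colon\cZ\to A$ such that $\iota_n(\cZ)$ commutes with subalgebras $A_1\subseteq A_2\subseteq\cdots$ of $A$ with dense union. Concretely, after telescoping $\theta$ to identify $A\cong A\otimes\cZ^{\otimes\infty}$, one may take $\iota_n$ to embed $\cZ$ as the $n$th tensor factor and let $A_n$ be the image of $A\otimes\cZ^{\otimes(n-1)}\otimes 1\otimes 1\otimes\cdots$. Then for any $a\in A$, approximating $a$ from $\bigcup_n A_n$ gives $\|[\iota_n(z),a]\|\to 0$, so $z\mapsto(\iota_n(z))_n$ descends to a unital $*$-homomorphism $\cZ\to A'\cap(\ell^\infty(A)/c_0(A))$.

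For (2)$\Rightarrow$(3) I would apply the Choi--Effros lifting theorem, which is available because $\cZ$ is nuclear, to lift the embedding $\Phi\colon\cZ\to A'\cap(\ell^\infty(A)/c_0(A))$ (viewed as a c.p.\ map into the quotient $\ell^\infty(A)/c_0(A)$) to a sequence of u.c.p.\ maps $\phi_n\colon\cZ\to A$; a small perturbation makes them unital. Since $\Phi$ is multiplicative and its image commutes with $A$, the sequence $(\phi_n)$ is automatically asymptotically multiplicative and asymptotically central, so (3) is realised by choosing $\phi_n$ for $n$ large enough relative to $\mathcal F$, $\mathcal G$ and $\e$.

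The hard step is (3)$\Rightarrow$(1). Here I would invoke a one-sided Elliott intertwining. Fix dense sequences $(a_n)$ of $A$ and $(z_n)$ of $\cZ$, and apply (3) inductively with increasingly stringent parameters to produce approximately central, approximately multiplicative unital $*$-linear maps $\phi_n\colon\cZ\to A$ whose values on $z_1,\dots,z_n$ approximately commute with $a_1,\dots,a_n$. The crucial ingredient is that $\cZ$ is strongly self-absorbing: the two unital embeddings $z\mapsto z\otimes 1$ and $z\mapsto 1\otimes z$ of $\cZ$ into $\cZ\otimes\cZ$ are approximately unitarily equivalent. Using this, one constructs unitaries $u_n\in A$ so that $\Ad u_n\circ\phi_n$ matches the maps at adjacent stages, and the resulting diagonal intertwining converges in the point-norm topology to a unital $*$-homomorphism $A\otimes\cZ\to A$ that is an approximate two-sided inverse of $a\mapsto a\otimes 1$; in the limit this yields $A\cong A\otimes\cZ$. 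The main obstacle is coordinating approximate multiplicativity, approximate centrality, and the approximate unitary equivalence of the two legs of $\cZ\otimes\cZ\cong\cZ$ simultaneously at each stage of the recursion; this is precisely the content of Winter's $\cZ$-absorption theorem, which is the substantive ingredient hidden behind (3)$\Rightarrow$(1).
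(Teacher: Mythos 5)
The paper does not actually prove this theorem: it is stated in \S\ref{S.4} as a well-known result with a pointer to \cite{Win:Ten}, so there is no in-paper argument to compare against. Your outline follows the standard route from the literature and is correct in structure: (1)$\Rightarrow$(2) via the identification $A\cong A\otimes\cZ^{\otimes\infty}$ and the tail embeddings, (2)$\Rightarrow$(3) via Choi--Effros lifting (legitimate since $\cZ$ is separable and nuclear, and the unital perturbation of the lifts is routine), and (3)$\Rightarrow$(1) via a one-sided Elliott intertwining against the first-factor embedding $A\to A\otimes\cZ$, powered by the strong self-absorption of $\cZ$.

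The only place where your write-up falls short of a proof is the step you yourself flag: ``one constructs unitaries $u_n\in A$ so that $\Ad u_n\circ\phi_n$ matches the maps at adjacent stages'' is precisely the content of the Toms--Winter theorem that a separable unital $A$ is $\cD$-stable iff $\cD$ embeds unitally into $A'\cap\bigl(\ell^\infty(A)/c_0(A)\bigr)$ for strongly self-absorbing $\cD$, and it is not a routine verification. In particular, to move the intertwining unitaries from $\cZ\otimes\cZ$ into $A$ one uses the approximately central copies of $\cZ$ together with the fact that the relevant unitaries can be taken in the connected component of the identity (here $K_1$-injectivity of $\cZ$, which holds since $\cZ$ has stable rank one, is what makes this painless). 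A complete argument should either reproduce that intertwining in detail or cite it explicitly (Toms--Winter, \emph{Strongly self-absorbing $\mathrm{C}^*$-algebras}, Theorem 2.2, or R\o rdam's classification monograph); as written, (3)$\Rightarrow$(1) is a correct roadmap rather than a proof. Note also that separability of $A$ is used twice and should be said where: in the diagonal/reindexing argument that upgrades the finitary statement (3) to an honest unital $*$-homomorphism into the central sequence algebra, and in the intertwining itself --- consistent with the paper's observation that (3)$\not\Rightarrow$(2) in the nonseparable setting.
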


In the nonseparable case, (1) $\Rightarrow$ (2) $\Rightarrow$ (3), and Theorem~\ref{T1} shows that (3) $\not\Rightarrow$ (2). 
Note that, by a downward L\"owenheim-Skolem argument, (3) remains equivalent to ``local $\cZ$-stability'' in the following sense:
\begin{enumerate}
\item[(1')] For every separable set $X \subseteq A$, there exists a $\cZ$-stable subalgebra $B \subseteq A$ which contains $X$.
\end{enumerate}
Whether (2) $\Rightarrow$ (1) holds remain unclear.

\begin{question}
Is there a (nonseparable) non-$\cZ$-stable $\mathrm{C}^*$-algebra $A$ such that $\cZ$ embeds into its central sequence algebra?
\end{question}

\begin{question} Can the conclusions of Theorem~\ref{T1}, Theorem~\ref{T2} and Theorem~\ref{T2-vN} be proved in ZFC? 
\end{question}

An approach to these two questions alternative to transfinite recursion 
would be to show that such algebra could be directly 
defined using combinatorics of the uncountable and some of the constructions from \cite{FaKa:Nonseparable} or \cite{Fa:Graphs}.

It is  not clear whether the conclusion of Theorem~\ref {T1} is genuinely weaker than the conclusion of Theorem~\ref{T2}, since we don't know whether there exists a simple C*-algebra whose central sequence algebra is abelian and nontrivial. Martino Lupini pointed out that such algebra cannot be separable unless the simplicity assumption is dropped. 
On the other hand, it is well-known that 
there exists a II$_1$ factor with a separable predual whose central sequence 
algebra is abelian and nontrivial (\cite{DiLa:Deux}).

\providecommand{\bysame}{\leavevmode\hbox to3em{\hrulefill}\thinspace}
\providecommand{\MR}{\relax\ifhmode\unskip\space\fi MR }
% \MRhref is called by the amsart/book/proc definition of \MR.
\providecommand{\MRhref}[2]{%
  \href{http://www.ams.org/mathscinet-getitem?mr=#1}{#2}
}
\providecommand{\href}[2]{#2}


\begin{thebibliography}{10}

\bibitem{Black:Operator}
B.~Blackadar, \emph{Operator algebras}, Encyclopaedia of Mathematical Sciences,
  vol. 122, Springer-Verlag, Berlin, 2006, Theory of $C\sp *$-algebras and von
  Neumann algebras, Operator Algebras and Non-commutative Geometry, III.

\bibitem{Bla:Cardinal}
A.~Blass, \emph{Combinatorial cardinal characteristics of the continuum},
  Handbook of Set Theory (M.~Foreman and A.~Kanamori, eds.), 2009.

\bibitem{BrOz:C*}
N.~Brown and N.~Ozawa, \emph{{$C\sp *$}-algebras and finite-dimensional
  approximations}, Graduate Studies in Mathematics, vol.~88, American
  Mathematical Society, Providence, RI, 2008.

\bibitem{DiLa:Deux}
J.~Dixmier and E.~C. Lance, \emph{Deux nouveaux facteurs de type {${\rm
  II}_{1}$}}, Invent. Math. \textbf{7} (1969), 226--234.

\bibitem{EllTo:Regularity}
G.A. Elliott and A.S. Toms, \emph{Regularity properties in the classification
  program for separable amenable {$C\sp *$}-algebras}, Bull. Amer. Math. Soc.
  \textbf{45} (2008), no.~2, 229--245.

\bibitem{Fa:Graphs}
I.~Farah, \emph{Graphs and {CCR} algebras}, Indiana Univ. Math. Journal
  \textbf{59} (2010), 1041Ð1056.

\bibitem{FaHaSh:Model3}
I.~Farah, B.~Hart, and D.~Sherman, \emph{Model theory of operator algebras
  {III}: Elementary equivalence and {II}$_1$ factors}, preprint,
  arXiv:1111.0998, 2011.

\bibitem{FaKa:NonseparableII}
I.~Farah and T.~Katsura, \emph{Nonseparable {UHF} algebras {II}:
  classification}, preprint, to appear.

\bibitem{FaKa:Nonseparable}
I.~Farah and T.~Katsura, \emph{Nonseparable {UHF} algebras {I}: {D}ixmier's
  problem}, Adv. Math. \textbf{225} (2010), no.~3, 1399--1430.

\bibitem{Fr:MA}
D.H. Fremlin, \emph{Consequences of martin's axiom}, Cambridge University
  Press, 1984.

\bibitem{HirKirWhi}
Ilan Hirshberg, Eberhard Kirchberg, and Stuart White, \emph{Decomposable
  approximations of nuclear {$C^*$}-algebras}, Adv. Math. \textbf{230} (2012),
  no.~3, 1029--1039. \MR{2921170}

\bibitem{Kat:Non-separable}
T.~Katsura, \emph{Non-separable {AF}-algebras}, Operator Algebras: The Abel
  Symposium 2004, Abel Symp., vol.~1, Springer, Berlin, 2006, pp.~165--173.

\bibitem{Tak:TheoryIII}
M.~Takesaki, \emph{Theory of operator algebras. {III}}, Encyclopaedia of
  Mathematical Sciences, vol. 127, Springer-Verlag, Berlin, 2003, Operator
  Algebras and Non-commutative Geometry, 8.

\bibitem{ToWin:Elliott}
Andrew~S. Toms and Wilhelm Winter, \emph{The {E}lliott conjecture for
  {V}illadsen algebras of the first type}, J. Funct. Anal. \textbf{256} (2009),
  no.~5, 1311--1340.

\bibitem{Wid:Nonisomorphic}
H.~Widom, \emph{Nonisomorphic approximately finite factors}, Proc. Amer. Math.
  Soc. \textbf{8} (1957), 537--540. \MR{0086275 (19,155b)}

\bibitem{Win:Nuclear}
W.~Winter, \emph{Nuclear dimension and {$\mathcal Z$}-stability of pure
  {C*}-algebras}, Invent. Math. \textbf{187} (2012), 259--342.

\bibitem{Win:Ten}
\bysame, \emph{Ten lectures on topological and algebraic regularity properties
  of nuclear {C*}-algebras}, CBMS conference notes, to appear.

\bibitem{WinterZacharias:NucDim}
Wilhelm Winter and Joachim Zacharias, \emph{The nuclear dimension of
  {$C^\ast$}-algebras}, Adv. Math. \textbf{224} (2010), no.~2, 461--498.
  \MR{2609012 (2011e:46095)}

\end{thebibliography}
\end{document}